\def\be{\begin{equation}}
\def\ee{\end{equation}}
\def\bse{\begin{subequations}}
\def\ese{\end{subequations}}
\newtheorem{thm}{Theorem}
\newtheorem{cor}[thm]{Corollary}
\newtheorem{lem}[thm]{Lemma}
\newtheorem{rem}[thm]{Remark}
\def\Xint#1{\mathchoice
{\XXint\displaystyle\textstyle{#1}}%
{\XXint\textstyle\scriptstyle{#1}}%
{\XXint\scriptstyle
\scriptscriptstyle{#1}}%
{\XXint\scriptscriptstyle
\scriptscriptstyle{#1}}%
\!\int}
\def\XXint#1#2#3{{
\setbox0=\hbox{$#1{#2#3}{\int}$}
\vcenter{\hbox{$#2#3$}}\kern-.5\wd0}}
\def\dashint{\Xint-}
\def\aint{\dashint}
\def\bse{\begin{subequations}}
\def\ese{\end{subequations}}
\def\bse{\begin{subequations}}
\def\ese{\end{subequations}}
\title{Large-time asymptotics of a \\ public goods game model with diffusion}
\author{Klemens Fellner\footnote{Institut f\"ur Mathematik und
    Wissenschaftliches Rechnen, Universit\"at Graz, Heinrichstra{\ss}e 36, 8010
    Graz. Email: \texttt{klemens.fellner@uni-graz.at}. Partially supported by NAWI Graz.}
  \and Evangelos Latos\footnote{Lehrstuhl f\"ur Mathematik IV, Universit\"at Mannheim, D-68131 Mannheim. Email: \texttt{evangelos.latos@math.uni-mannheim.de}.}
  \and Takashi Suzuki\footnote{Graduate School of Engineering Science /
Department of Systems Innovation / Division of Mathematical Science, Osaka University. Email: \texttt{suzuki@sigmath.es.osaka-u.ac.jp}}
}
\date{\today}
\begin{document}

\maketitle

\begin{abstract}
We consider a spatially inhomogeneous public goods game model with diffusion. By utilising a generalised Hamiltonian structure of the model we study the existence of global classical solutions as well as the large time behaviour: First, the asymptotic convergence of the PDE to the corresponding ODE system is proven. This result entails also the periodic behaviour of PDE solutions in the large time limit.
Secondly, a shadow system approximation is considered and the convergence of the PDE  to the shadow system 
in the associated fast-diffusion limit is shown. Finally, the asymptotic convergence of the shadow to the ODE system is proven.
\end{abstract}

\vskip5mm

{\small
{Mathematical Subject Classifications}: 35Q91, 35K40, 35B40, 35B10.

{Keywords}: Public goods game, reaction diffusion system, asymptotic behaviour, spatial homogenisation, shadow system.
}
\vskip5mm

\section{Introduction}

In this work, we are interested in a PDE version of an optional public good game \cite{HMHS} 
\begin{align}
\label{HamPDEsys0}
\begin{cases}
\partial_t f - d_f \Delta f  =   -f(1-f)\,G(z),\quad &x\in\Omega, t>0,
\\[2mm]
\partial_t z - d_z \Delta z =  (\sigma - f (r-1)) \,z(1-z)(1-z^{N-1}),\quad &x\in\Omega, t>0,
\\[2mm]
\frac{\partial}{\partial\nu}(f,z) =0,\quad &x\in\partial\Omega,
\\[2mm]
f(x,0)=f_0(x),\ z(x,0)=z_0(x),\quad &x\in\Omega,
\end{cases}
\end{align}
where $f$ and $z$ are relative fractions of populations 
and we assume 
\begin{equation}\label{id}
0\le f_0(x) \le 1 \qquad \text{and} \qquad 0\le z_0(x) \le 1,
\qquad \forall x \in \Omega.
\end{equation}
Here, $\Omega$ is a bounded domain of $\mathbb{R}^d$ with smooth boundary and outer unit normal $\nu$.
Moreover, $d_f,d_z>0$ are positive diffusion coefficients.

For the remaining parameters, we assume 
\begin{equation}\label{para}
0<\sigma<r-1, \qquad 2<r<N,
\end{equation}
and the function $G(z)$ is given by
\begin{equation}\label{G-function}
	G(z):=1+(r-1)z^{N-1}-\frac{r}{N}\frac{1-z^N}{1-z}.
\end{equation}
Note that in the parameter range \eqref{para}, the function $G(z)$ has exactly one sign change in $z\in(0,1)$ and looks 
qualitatively like Figure~\ref{PlotG}, see Section~\ref{sec:pre} for the details. 
Moreover, $N\ge3$ in \eqref{para} and \eqref{G-function}  denotes the number of players, see Section~\ref{sec:pre} for more details on the considered public good game \cite{HMHS}.
\begin{figure}[htp]\label{PlotG}
\begin{center}
\includegraphics[scale=0.5]{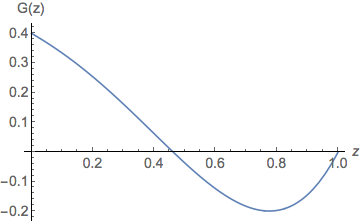}
\caption{A prototypical plot of $G(z)$ with $r=3$ and $N=5$.} 
\end{center}
\end{figure}

Adding diffusion in these kind of models has already been considered in \cite{AGN,Frey,MK}, mostly to model microbial interactions mediated by diffusible molecules, since standard game theory cannot describe such behaviour. Moreover, in contrast to human behaviour or animal colonies, microbial communities rarely rely on direct contact since microbes primarily communicate though diffusible molecules. This diffusive behaviour is the reason why such molecules are often termed public goods, see \cite{AGN,Frey,MK,TH} and the references therein.

This work focuses on the study of the dynamics of PDE-problems of the type \eqref{HamPDEsys0}. 
Herein, the considered optional public good game \cite{HMHS} should be viewed as an interesting example case that leads us to study the  
general question of links between PDE and ODE model.
In fact, we expect our mathematical analysis to similarly apply to related models to \eqref{HamPDEsys0},
which shares the below considered key properties.
While global existence of classical solutions of \eqref{HamPDEsys0} is straightforward, we are in particular interested in the asymptotic large-time behaviour of the solutions and their qualitative properties. 

More precisely, a main question of this paper asks if 
PDE model \eqref{HamPDEsys0}, despite having 
sign changing terms at the right hand side of both equations, exhibits the same large-time behaviour as the corresponding ODE-model, which was originally studied in \cite{HMHS}.
\medskip

The key structural property, which will allow to 
characterise the large-time behaviour of the PDE model 
\eqref{HamPDEsys0} is that the original ODE-model \cite{HMHS} features in the parameter range \eqref{para} a generalised Hamiltonian structure of the form 
\begin{equation}\label{HamsysOri}
\begin{cases}
\dot{f} = \frac{\partial H_2}{\partial z}\,\phi(f,z),\\[2mm]
\dot{z} = -\frac{\partial H_1}{\partial f}\,\phi(f,z),
\end{cases}
\qquad \text{where}\qquad \phi(f,z)=f(1-f)z(1-z)(1-z^{N-1})
\end{equation}
with a Hamiltonian 
\begin{align}
H (f,z):= H_1(f) + H_2(z),\qquad \text{and, thus}\qquad
\frac{d}{dt} H(f(t),z(t)) =0,
\label{Hamiltonian}
\end{align}
where $H_1$ and $H_2$ are defined below in \eqref{H1} and \eqref{H2}.


The first theorem shows that PDE solutions become spatially homogeneous  as $t\uparrow+\infty$ subject to \eqref{HamsysOri}. The proof requires the technical Lemma  
\ref{HzzPosSemDef}, which provides sufficient conditions  to the positive definiteness of the (Hessian of the) Hamiltonian $H(f,z)$.

\begin{thm}[Global existence and convergence to the ODE] \label{thm:1}\hfill\\
Given $f_0, z_0\in C^2(\overline{\Omega})$ with finite Hamiltonian $H(f_0,z_0)<+\infty$ and $\partial\Omega\in C^2$. 
Assume  
the Hessian of the Hamiltonian $H(f,z)$ to be positive definite, which holds, for instance, under the assumptions of Lemma \ref{HzzPosSemDef}.

Then, a unique global-in-time classical solution to \eqref{HamPDEsys0} exists. Moreover, given a PDE solution $(f(\cdot,t), z(\cdot,t))$ to system \eqref{HamPDEsys0}, there exists an ODE orbit ${\cal O}=\{ (\tilde f(t), \tilde z(t))\}_{t\geq 0}$, where $(\tilde f, \tilde z)=(\tilde f(t), \tilde z(t))$ is a solution to \eqref{HamsysOri}, with 
\begin{equation} 
\lim_{t\uparrow+\infty}\mbox{dist}_{C^2}((f(\cdot,t), z(\cdot,t)), {\cal O})=0.
 \label{eqn:convergence}
\end{equation} 
Here, 
$\mbox{dist}_{C^2} ((f,z), {\cal O})=\inf_{(\tilde f, \tilde z)\in {\cal O}}
\Vert (f,z)-(\tilde f, \tilde z)\Vert_{C^2}$.  

\end{thm}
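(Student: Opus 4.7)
The plan is to establish the result in three steps: global well-posedness with a precompact trajectory, construction of a Lyapunov functional built from the Hamiltonian, and identification of the $\omega$-limit set with a single periodic orbit of the ODE.

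\textbf{Global existence and precompactness.} My first observation is that the rectangle $[0,1]\times[0,1]$ is a positively invariant region for \eqref{HamPDEsys0}: the factor $f(1-f)$ in the $f$-equation and the factors $z$ and $(1-z^{N-1})$ in the $z$-equation make the reaction terms vanish on the respective edges, so that standard invariant-region theory for weakly coupled parabolic systems, together with \eqref{id}, yields $0\le f,z\le 1$ for all times. The nonlinearities are then uniformly bounded and smooth, and a parabolic bootstrap provides a unique global classical solution with a uniform $C^{2+\alpha,1+\alpha/2}(\overline\Omega\times[1,\infty))$ bound for some $\alpha\in(0,1)$. In particular, $\{(f(\cdot,t),z(\cdot,t))\}_{t\ge 1}$ is relatively compact in $C^2(\overline\Omega)^2$.

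\textbf{Lyapunov functional.} My main ingredient is
\[
E(t):=\int_\Omega H\bigl(f(x,t),z(x,t)\bigr)\,dx,
\]
whose time derivative I compute by testing the two equations against $H_1'(f)$ and $H_2'(z)$. The reaction contribution reads $H_1'(f)[-f(1-f)G(z)]+H_2'(z)(\sigma-(r-1)f)z(1-z)(1-z^{N-1})$, and \eqref{HamsysOri} forces the two summands to equal $\pm H_1'(f)H_2'(z)\phi(f,z)$ with opposite signs, so they cancel. This is the PDE counterpart of the ODE conservation of $H$. The diffusive contribution, after integration by parts using the Neumann boundary condition, yields
\[
\frac{d}{dt}E(t)=-d_f\int_\Omega H_1''(f)|\nabla f|^2\,dx-d_z\int_\Omega H_2''(z)|\nabla z|^2\,dx\;\le\;0,
\]
the sign being a consequence of the hypothesis via Lemma \ref{HzzPosSemDef} that the Hessian of $H(f,z)=H_1(f)+H_2(z)$ is positive definite, equivalently that $H_1'',H_2''>0$.

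\textbf{Spatial homogenisation and identification of the $\omega$-limit.} Since $E$ is non-increasing and bounded below, $E(t)\downarrow E_\infty$ and the dissipation is time-integrable. The $E$-bound keeps $(f,z)$ pointwise inside a compact subset of the interior of $[0,1]^2$ (since $H_1,H_2$ diverge at the corners), where $H_1'',H_2''$ are bounded below by a positive constant, so $\int_0^\infty\int_\Omega(|\nabla f|^2+|\nabla z|^2)\,dx\,dt<\infty$. The uniform $C^{2+\alpha,1+\alpha/2}$-bound renders $t\mapsto\|\nabla f(\cdot,t)\|_{L^2}^2+\|\nabla z(\cdot,t)\|_{L^2}^2$ uniformly continuous, so it must tend to zero. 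Consequently every $C^2$-subsequential limit of the trajectory is spatially constant. Denoting by $\omega$ the non-empty, compact, PDE-invariant $\omega$-limit set, the PDE flow restricted to $\omega$ reduces to the ODE \eqref{HamsysOri}, and $E_\infty=|\Omega|\,H(\tilde f,\tilde z)$ for every $(\tilde f,\tilde z)\in\omega$; hence $\omega$ sits inside a single level set of the strictly convex $H$, which under the Hamiltonian flow is a single periodic orbit $\mathcal O$. Minimality of the periodic orbit forces $\omega=\mathcal O$, and precompactness in $C^2$ yields \eqref{eqn:convergence}. The main technical obstacle is to show that the $E$-bound genuinely keeps the solution uniformly away from the singular corners of $[0,1]^2$, so that the dissipation controls the full $L^2$-norm of the gradients rather than a degenerate weighted version; once this is secured, the LaSalle-type argument closes as sketched.
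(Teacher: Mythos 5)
Your overall strategy coincides with the paper's: global existence via the invariant region $[0,1]^2$, the integrated Hamiltonian $\mathcal{H}(f,z)=\int_\Omega H(f,z)\,dx$ as a Lyapunov functional with dissipation $-\int_\Omega(d_fH_{ff}|\nabla f|^2+d_zH_{zz}|\nabla z|^2)\,dx$, spatial homogenisation of the $\omega$-limit set by a LaSalle-type argument, and identification of the limit with an orbit of \eqref{HamsysOri}. The one step that does not hold as written is the assertion that ``the $E$-bound keeps $(f,z)$ pointwise inside a compact subset of the interior of $[0,1]^2$''. An integral bound $\int_\Omega H(f,z)\,dx\le E(0)$ gives no pointwise lower bound on $\min\{f,1-f,z,1-z\}$: the logarithmic singularities of $H_1,H_2$ are integrable, so $f$ or $z$ may approach $0$ or $1$ on small sets without increasing $E$. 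You yourself flag this as the main unresolved obstacle, so the proof is genuinely incomplete at exactly this point.

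Fortunately the obstacle is largely illusory, and the paper circumvents it differently. For the integrability of the gradient dissipation no confinement is needed at all: $H_1''(f)=\sigma/f^2+(r-1-\sigma)/(1-f)^2\ge r-1>0$ for every $f\in(0,1)$, and under \eqref{para} the coefficients $1-\frac{r}{N}$ and $\frac{r}{2}-1$ of the singular terms in $H_2''$ are positive while $R''$ is bounded, so $H_2''\to+\infty$ as $z\to0,1$ and, combined with the positivity from Lemma \ref{HzzPosSemDef}, $\inf_{(0,1)}H_2''>0$. Hence the weighted dissipation already controls $\int_0^\infty(\|\nabla f\|_2^2+\|\nabla z\|_2^2)\,dt$ with no interiority assumption. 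Interiority is only needed to make sense of $H$ on the $\omega$-limit set and to run the ODE dynamics there, and the paper obtains it a posteriori rather than along the whole trajectory: by Fatou's lemma $\int_\Omega H(w_\infty)\,dx\le\liminf_k\int_\Omega H(w(\cdot,t_k))\,dx<\infty$ for every $w_\infty\in\omega(w_0)$, which excludes $w_\infty\equiv0$ and $w_\infty\equiv1$ (and, for the spatially constant limits you have already produced, forces the constant into the open square $(0,1)^2$); the strong maximum principle then keeps the trajectory through $w_\infty$ strictly inside $(0,1)^2$ for $t>0$. If you replace your confinement claim by these two observations, your argument closes; your identification of the limit orbit via the connected level sets of the strictly convex $H$ is a reasonable, slightly more self-contained alternative to the paper's appeal to \cite{KarSuzYam} for the final periodicity step.
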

\begin{rem}
A global existence result could  also be proved by the method of invariant sets (see e.g. \cite{SmollerBook}) since $0<f_0,z_0<1$ implies $0<f,z<1$ for all times. Yet, by using the Hamiltonian structure of the system, we can show  \eqref{eqn:convergence} and get more information about the global dynamics of system \eqref{HamPDEsys0} as stated by the following results. Moreover, the Hamiltonian approach 
can be extended to systems without invariant sets.
	
\end{rem}
Motivated by \cite{KarSuzYam}, we notice that any solution to the ODE model \eqref{HamsysOri} is periodic and that any PDE orbit  is absorbed into one of the periodic ODE orbits ${\cal O}=\{ (\tilde f(t), \tilde z(t))\}_{t\geq 0}$. Thus, we derive the following consequence of Theorem \ref{thm:1}:
\begin{cor}[Periodicity of the large-time behaviour]\label{Cor}\hfill\\
Let the ODE orbit ${\cal O}=\{ (\tilde f(t), \tilde z(t))\}_{t\geq 0}$ as defined in Theorem \ref{thm:1} be composed of more than one point, i.e. be a non-trivial orbit. Then, the 
PDE solution $(f(\cdot,t), z(\cdot,t))$ is periodic 
in the large-time limit and 
there exists a "phase shift" $\lambda > 0$ such that
	\begin{equation}\label{l}
	\lim_{t\uparrow\infty}\|(f(\cdot,t+\lambda),z(\cdot,t+\lambda))-(\tilde f(t),\tilde z(t))\|_{C^2(\Omega)}
	=0.
	\end{equation}

\end{cor}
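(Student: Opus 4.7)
My plan is to refine Theorem \ref{thm:1}'s convergence-to-the-orbit result into convergence with a specific phase by analysing the spatial averages
\[
(\bar f(t), \bar z(t)) := \frac{1}{|\Omega|}\int_\Omega (f,z)(\cdot,t)\,dx,
\]
and showing that they satisfy the Hamiltonian ODE \eqref{HamsysOri} up to a time-integrable perturbation. Since $\mathcal{O}$ is non-trivial and the Hamiltonian has positive definite Hessian, $\mathcal{O}$ is a smooth closed curve bounded away from the unique interior equilibrium and possesses finite period $T > 0$. In a tubular neighbourhood $U$ of $\mathcal{O}$ I would introduce a smooth phase function $\Phi: U \to \mathbb{R}/T\mathbb{Z}$ whose restriction to $\mathcal{O}$ is the natural ODE phase, so that $\nabla\Phi \cdot (F_1, F_2) \equiv 1$ along the vector field $(F_1, F_2)$ of \eqref{HamsysOri}.

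Averaging \eqref{HamPDEsys0} over $\Omega$ and using the Neumann condition yields $\dot{\bar f} = \overline{F_1(f,z)}$ and $\dot{\bar z} = \overline{F_2(f,z)}$. Because $\overline{f - \bar f} = \overline{z - \bar z} = 0$, a Taylor expansion of $F_i$ about $(\bar f, \bar z)$ together with the Poincar\'e inequality gives
\[
\bigl|\overline{F_i(f,z)} - F_i(\bar f, \bar z)\bigr| \le C \int_\Omega \bigl(|\nabla f|^2 + |\nabla z|^2\bigr)\,dx.
\]
Differentiating $\int_\Omega H(f,z)\,dx$ along \eqref{HamPDEsys0}, the reaction contributions cancel because the ODE is Hamiltonian while the diffusion contributions (using $H_{fz} \equiv 0$) give
\[
\frac{d}{dt}\int_\Omega H(f,z)\,dx = -\int_\Omega \bigl(d_f H_1''(f)|\nabla f|^2 + d_z H_2''(z)|\nabla z|^2\bigr)\,dx.
\]
Combined with the uniform lower bound $H_1'', H_2'' \ge \kappa > 0$ afforded by positive definiteness, time integration produces $\int_0^\infty\!\int_\Omega (|\nabla f|^2 + |\nabla z|^2)\,dx\,dt < \infty$, so the deviation of the averaged system from the ODE vector field is $L^1(0,\infty)$ in time.

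Finally, I would set $\theta(t) := \Phi(\bar f(t), \bar z(t))$, well-defined for $t$ large by Theorem \ref{thm:1}, and compute
\[
\dot\theta(t) = 1 + \nabla\Phi(\bar f, \bar z) \cdot \bigl(\overline{F_1} - F_1(\bar f, \bar z),\, \overline{F_2} - F_2(\bar f, \bar z)\bigr),
\]
whose perturbation term is $L^1$ in time. Hence $\theta(t) - t$ converges to some $\lambda_\infty$, and the continuity of $(\tilde f, \tilde z)$ together with Theorem \ref{thm:1}'s $C^2$-convergence yields $\|(f,z)(\cdot,t) - (\tilde f(t+\lambda_\infty), \tilde z(t+\lambda_\infty))\|_{C^2} \to 0$; choosing $\lambda \in (0, T]$ congruent to $-\lambda_\infty$ modulo $T$ and invoking $T$-periodicity gives \eqref{l}. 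The hard part is the $L^1$-integrability step: pointwise decay of the perturbation would only give $\dot\theta \to 1$, which is insufficient for $\theta(t) - t$ to converge (e.g.\ $\varepsilon(t) = 1/t$ yields $\theta(t) - t \sim \log t$). It is the Hamiltonian dissipation, together with the positive-definite-Hessian hypothesis, that upgrades pointwise decay to genuine $L^1$-in-time integrability and hence to phase convergence.
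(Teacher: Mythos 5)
Your dissipation and averaging estimates are correct and in fact reproduce the mechanism the paper itself relies on: your identity for $\frac{d}{dt}\int_\Omega H(f,z)\,dx$ is \eqref{HaMon}, and together with the uniform lower bounds on $H_{ff},H_{zz}$ it yields $\int_0^\infty\int_\Omega(|\nabla f|^2+|\nabla z|^2)\,dx\,dt<+\infty$ and hence the $L^1$-in-time control of $\overline{F_i(f,z)}-F_i(\bar f,\bar z)$ via Poincar\'e. The genuine gap is in the phase-tracking step, and it is precisely the difficulty you flag in your last sentence, only displaced onto a term you have not controlled. A phase function $\Phi:U\to\mathbb{R}/T\mathbb{Z}$ with $\nabla\Phi\cdot(F_1,F_2)\equiv 1$ on a full tubular neighbourhood $U$ of $\mathcal{O}$ cannot exist here: integrating $\frac{d}{dt}\Phi(\gamma(t))=1$ once around a neighbouring closed orbit $\gamma\subset U$ (all interior orbits of \eqref{HamsysOri} are closed) forces the period of $\gamma$ to be a multiple of $T$, i.e.\ the centre would have to be isochronous, which is neither established nor expected. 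If instead $\nabla\Phi\cdot(F_1,F_2)=1$ only on $\mathcal{O}$, then
\[
\dot\theta(t)=1+\bigl[\nabla\Phi\cdot(F_1,F_2)\big|_{(\bar f,\bar z)}-1\bigr]
+\nabla\Phi\cdot\bigl((\overline{F_1},\overline{F_2})-(F_1,F_2)(\bar f,\bar z)\bigr),
\]
and the middle bracket is only $O(\mathrm{dist}((\bar f,\bar z),\mathcal{O}))$. Theorem \ref{thm:1} provides no rate for this distance: the Ljapunov functional gives integrability of the dissipation, but $\mathcal{H}(t)-\mathcal{H}_\infty$ (which is what bounds the distance to the level set $\mathcal{O}$, up to the integrable Jensen gap) is merely monotone and vanishing, and may decay like $1/\log t$, which is not in $L^1(0,\infty)$. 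By your own observation that $\dot\theta\to1$ does not imply convergence of $\theta(t)-t$, the asymptotic phase $\lambda_\infty$ is not obtained. This is the classical obstruction to asymptotic phase for a non-hyperbolic centre, and it is the real content of Corollary \ref{Cor}.

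The paper circumvents this entirely by following \cite{KarSuzYam}: uniform $C^2$ bounds on $(f_t,z_t)$ and Ascoli--Arzel\'a give, along a subsequence $t_k'$, locally uniform convergence of the time translates $(f,z)(\cdot,t+t_k')$ to a limit which by \eqref{NablaC1} is spatially homogeneous, hence an ODE solution running on $\mathcal{O}$; the exact $l$-periodicity of that limit plus a triangle inequality then yields $\|(f,z)(\cdot,t+l+s)-(f,z)(\cdot,t+s)\|_{C^2}\to0$ as $s\to\infty$, from which \eqref{l} is deduced. No rate of approach to $\mathcal{O}$ is ever required. To salvage your route you would need either an isochronicity property of \eqref{HamsysOri} near $\mathcal{O}$ or an integrable-in-time decay of $\mathcal{H}(t)-\mathcal{H}_\infty$; absent those, the subsequence/asymptotic-periodicity argument is the way to go.
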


Next, we consider the shadow system (see e.g. \cite{nf87}) corresponding to system \eqref{HamPDEsys0}, which is (formally) obtained in the limit $d_z\uparrow+\infty$:
\begin{equation}\label{ssHamPDEsys0}
\begin{cases}
\partial_t F - d_F \Delta F =  -F(1-F)G(Z),\\[2mm]
\frac{d}{dt}Z = Z(1-Z)(1-Z^{N-1}) \aint_\Omega(\sigma - F (r-1)) dx,
\end{cases}
\end{equation}
where now $Z=Z(t)$. Shadow system~\eqref{ssHamPDEsys0} has homogeneous Neumann boundary conditions for $F$ and
considered subject to the initial data 
\begin{equation}
\left. F\right\vert_{t=0}=F_0=f_0(x) \quad \mbox{in $\Omega$}, \qquad \left. Z\right\vert_{t=0}=\overline{z}_0=\int_\Omega z_0 \,dx.  
 \label{eqn:shadow2}
\end{equation} 
Shadow systems are used to approximate the parabolic problem by the equilibrium problem obtained in the limit $d_z\uparrow+\infty$, see e.g \cite{kee78}. 
Accordingly, $F=F(x,t)$ is a space and time dependent function while $Z=Z(t)$ depends only on $t$. The following 
theorem justifies rigorous the shadow system approximation scheme. 
{Note that we can equally consider and prove the following results for the 
shadow system obtained in the limit $d_f\uparrow+\infty$.} 

\begin{thm}[Convergence to the shadow system]\hfill\\
Suppose the assumptions of Theorem~\ref{thm:1} and assume $u_0, v_0\in W^{3,s}(\Omega)$, $s>d$ with smooth boundary $\partial\Omega$. 
Let $(f,z)$ and $(F,Z)$ be the solutions to (\ref{HamPDEsys0})  and (\ref{ssHamPDEsys0}), respectively.  Then, for any $T>0$, holds 
\begin{equation} 
\lim_{d_z\uparrow +\infty}\sup_{t\in [0,T]}\left\{ \Vert f(\cdot, t)-F(\cdot, t)\Vert_{C^2} +\Vert z(\cdot,t)-Z(t)\Vert_{C^2}\right\}=0.  
 \label{eqn:conv}
\end{equation} 
\label{thm:2}
\end{thm}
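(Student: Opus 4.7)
The plan is a classical two-step shadow-limit argument. \emph{Step 1 (spatial homogenisation of $z$):} Introduce $\bar z(t):=\aint_\Omega z(\cdot,t)\,dx$ and $w:=z-\bar z$. Averaging the $z$-equation of \eqref{HamPDEsys0} and subtracting,
\begin{equation*}
\partial_t w-d_z\Delta w=h-\aint_\Omega h\,dx,\qquad h:=(\sigma-(r-1)f)\,z(1-z)(1-z^{N-1}),
\end{equation*}
with homogeneous Neumann condition and $\aint_\Omega w\,dx=0$. The $L^\infty$ bounds $0\le f,z\le 1$ from Theorem~\ref{thm:1} give $\|h\|_\infty\le C$ independently of $d_z$. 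Testing by $w$, applying the Neumann--Poincar\'e inequality $C_P\|w\|_{L^2}^2\le\|\nabla w\|_{L^2}^2$ (valid since $\bar w=0$) and Young's inequality, produces $\tfrac{d}{dt}\|w\|_{L^2}^2+d_zC_P\|w\|_{L^2}^2\le C d_z^{-1}$, whence Gronwall gives
\begin{equation*}
\|w(t)\|_{L^2}^2\le\|w(0)\|_{L^2}^2 e^{-d_zC_Pt}+C d_z^{-2}.
\end{equation*}

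\emph{Step 2 ($C^2$-upgrade):} Together with the uniform $L^\infty$ bounds on $f,z$ and the assumed $W^{3,s}$-regularity of the initial data, parabolic Schauder/$L^p$ theory delivers $d_z$-uniform bounds $\|w(t)\|_{H^k(\Omega)}\le M_k$ for any $k$; the cleanest derivation uses the time rescaling $\tau=d_z t$, which converts the $z$-equation into a normalised heat equation driven by an $O(d_z^{-1})$ source. Interpolating between the sharp $L^2$-decay and these uniform higher-regularity bounds,
\begin{equation*}
\|w(t)\|_{C^2(\overline\Omega)}\le C\,\|w(t)\|_{L^2}^\theta\,\|w(t)\|_{H^k}^{1-\theta}\xrightarrow[d_z\to\infty]{}0
\end{equation*}
uniformly on $[\tau_0,T]$ for any $\tau_0>0$ (and on all of $[0,T]$ if $z_0\equiv\bar z_0$; otherwise an initial layer of duration $O((d_zC_P)^{-1})$ is unavoidable, but the weaker quantity $\int_0^T\|w\|_\infty^2\,dt\to 0$ still holds, which is all that Step~3 needs).

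\emph{Step 3 (coupled comparison):} Set $\xi:=f-F$ and $\eta:=\bar z-Z$, both vanishing at $t=0$ by the choice of initial data. Subtracting the corresponding equations,
\begin{align*}
\partial_t\xi-d_f\Delta\xi &= F(1-F)G(Z)-f(1-f)G(z),\\
\dot\eta &= \aint_\Omega(\sigma-(r-1)f)\,z(1-z)(1-z^{N-1})\,dx - Z(1-Z)(1-Z^{N-1})\aint_\Omega(\sigma-(r-1)F)\,dx,
\end{align*}
and each right-hand side is Lipschitz in $(\xi,\eta)$ up to an additive error controlled by $w$ (note $|z-Z|\le|w|+|\eta|$). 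An $L^2$ energy estimate on $\xi$ (absorbing the Laplacian with its good sign) combined with the ODE estimate on $\eta$ gives $\tfrac{d}{dt}\Phi\le C\Phi+C\|w(\cdot,t)\|_\infty^2$ for $\Phi:=\|\xi\|_{L^2}^2+\eta^2$; Gronwall with $\Phi(0)=0$ and $\int_0^T\|w\|_\infty^2\,dt\to0$ yields $\Phi\to 0$ uniformly on $[0,T]$. A final parabolic Schauder bootstrap on $\xi$ and on the $z$-equation (whose source becomes Hölder-close to that of $F,Z$ once $C^0$-convergence is established) lifts the convergence to $C^2$, closing \eqref{eqn:conv}.

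\textbf{Main obstacle.} The delicate point is Step~2: converting $L^2$-smallness of $w$ into $C^2$-smallness while the diffusion coefficient $d_z$ in the principal part tends to infinity is not a direct consequence of standard Schauder theory, whose constants a priori degenerate. The rescaling $\tau=d_z t$ is the crucial trick, reducing the $z$-equation to a standard parabolic problem with vanishing forcing; Schauder estimates then apply uniformly in $d_z$, and interpolation against the sharp $L^2$-decay of Step~1 transfers the smallness to $C^2(\overline\Omega)$.
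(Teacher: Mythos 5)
Your proposal is correct in substance but follows a genuinely different, quantitative route from the paper. The paper's proof is a compactness argument: it derives $d_z$-uniform bounds ($\Vert f\Vert_{W^{3,s}}+\Vert z\Vert_{W^{3,s}}\leq C$ via the Neumann heat-semigroup estimates, together with the time-integrated dissipation bound $\int_0^T\Vert\nabla z\Vert_2^2\,dt=O(d_z^{-1})$, which is the weak, time-averaged cousin of your Step 1), extracts from any sequence $d_z\uparrow\infty$ a subsequence converging in $C([0,T],C^2\times C^2)$ by Morrey and Ascoli--Arzel\`a, identifies the limit as the solution of the shadow system (spatial constancy of $Z$ coming from the vanishing integrated gradient), and concludes by uniqueness of that limit. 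You replace this with a pointwise-in-time $L^2$ decay of $w=z-\bar z$ via Poincar\'e--Gronwall, interpolation against $d_z$-uniform higher norms, and a direct Gronwall comparison of $(f,\bar z)$ with $(F,Z)$; this buys explicit rates ($\Vert w\Vert_{L^2}=O(d_z^{-1})$ after the initial layer) and avoids the subsequence-plus-uniqueness step, at the price of more bookkeeping in Step 3. Your remark about the initial layer is not a defect of your argument but of the theorem as stated: at $t=0$ one has $\Vert z(\cdot,0)-Z(0)\Vert_{C^2}=\Vert z_0-\bar z_0\Vert_{C^2}$, which is independent of $d_z$, so \eqref{eqn:conv} can only hold with the supremum over all of $[0,T]$ if $z_0$ is constant. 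The paper's own compactness step quietly hides this point: uniform convergence in $C([0,T],C^2)$ would force the spatially constant limit $Z$ to equal $z_0(x)$ at $t=0$, and the family cannot be time-equicontinuous at $t=0$ uniformly in $d_z$ since $z_t=d_z\Delta z+\varphi_2$ is not uniformly bounded there. The version you actually prove --- convergence of the $z$-component on $[\tau_0,T]$ for any $\tau_0>0$ (or on $[0,T]$ when $z_0\equiv\bar z_0$), with the $f$-component converging on all of $[0,T]$ --- is the correct statement, and is consistent with the spirit of the paper's own Remark following the theorem.
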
 
\begin{rem}
Note the additional initial regularity $u_0, v_0\in W^{3,s}(\Omega)$ constitutes the minimal regularity, which is required to prove Theorem~\ref{thm:2}. However, standard parabolic regularity implies arbitrarily regularity of solutions for arbitrarily smooth boundaries $\partial\Omega$.  Parabolic smoothing also implies that 
the initial regularity could be relaxed to $u_0, v_0\in L^{\infty}(\Omega)$ as in \eqref{id} if statement \eqref{eqn:conv} 
is relaxed to $t\in[\tau,T]$ for $\tau>0$.
\end{rem}

	The asymptotics of the shadow system are also given by the ODE system as $t\uparrow\infty$. 
	
\begin{thm}[Convergence from the shadow to the ODE system]\label{th3}\hfill\\
Under the assumptions of Theorem~\ref{thm:1}, let $(F,Z)=(F(x,t), Z(t))$ be the solution to the shadow system \eqref{ssHamPDEsys0}. 
Let $(\hat{f}, \hat{z})=(\hat f(t), \hat z(t))$ be the solution to the ODE system \eqref{HamsysOri} subject to initial data
\begin{equation} 
\hat f_0=\hat{f}(0)=\overline{F}_0, \qquad \hat z_0=\hat{z}(0)=\overline{z}_0, 
 \label{eqn:oi}
\end{equation} 
where $\displaystyle{\overline{F}_0=\aint_\Omega f_0(x)dx}$.  Then, it holds that 
\begin{equation} 
\lim_{t\uparrow+\infty}\Vert F(\cdot,t)-\hat{f}(t)\Vert_{C^2}=0
\qquad
\text{and}
\qquad
\lim_{t\uparrow+\infty}\left( Z(t)-\hat{z}(t)\right)=0.
 \label{eqn:conv3}
\end{equation} 
 \label{thm:3} 
\end{thm}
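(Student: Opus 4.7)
The plan is to adapt the Hamiltonian Lyapunov argument of the proof of Theorem~\ref{thm:1} to the shadow system \eqref{ssHamPDEsys0}, exploiting the key simplification that $Z(t)$ is already spatially constant so only $F$ carries spatial inhomogeneity. The dissipation should force $F$ to homogenise in $C^2$ as $t\to\infty$; the averaged pair $(\bar F, Z)$ with $\bar F(t):=\aint_\Omega F(x,t)\,dx$ will then satisfy the ODE \eqref{HamsysOri} up to a vanishing spatial-variance error, and the matched initial data $(\bar F(0), Z(0))=(\overline F_0,\overline z_0)=(\hat f(0),\hat z(0))$ will identify the limit trajectory with $(\hat f, \hat z)$.

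First I would test the $F$-equation against $H_1'(F)$ and the $Z$-equation against $|\Omega|H_2'(Z)$, exploiting the identities $H_1'(f)f(1-f)=-(\sigma-f(r-1))$ and $H_2'(z)z(1-z)(1-z^{N-1})=-G(z)$ implicit in \eqref{HamsysOri}. The reaction contributions cancel exactly and one obtains
\begin{align*}
\frac{d}{dt}\,L(t) = -d_F\int_\Omega H_1''(F)|\nabla F|^2\,dx \leq 0, \qquad L(t) := \int_\Omega H(F(x,t),Z(t))\,dx,
\end{align*}
under the positive definiteness hypothesis of Theorem~\ref{thm:1}. Since $0<F,Z<1$ uniformly by the invariant-set property and $H$ is bounded below on this range, $L$ is a genuine Lyapunov functional and the $H_1''$-weighted dissipation is time-integrable. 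Combining the uniform positive lower bound of $H_1''(F)$ on compact subsets of $(0,1)$ with Poincar\'e's inequality applied to the zero-mean function $F-\bar F$ yields $\int_0^\infty\aint_\Omega(F-\bar F)^2\,dx\,dt<\infty$; a standard parabolic bootstrap, as in the proof of Theorem~\ref{thm:1}, then upgrades this integral decay to the pointwise-in-time convergence $\|F(\cdot,t)-\bar F(t)\|_{C^2}\to 0$ as $t\to\infty$.

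Integrating the $F$-equation over $\Omega$ gives
\begin{align*}
\dot{\bar F} = -\bar F(1-\bar F)G(Z) + G(Z)\aint_\Omega(F-\bar F)^2\,dx,
\end{align*}
so $(\bar F, Z)$ satisfies \eqref{HamsysOri} exactly up to the variance error $G(Z)\aint_\Omega(F-\bar F)^2\,dx$, which is $L^1$-in-time by the previous step. Since $(\bar F(0),Z(0))=(\hat f(0),\hat z(0))$, a Gr\"onwall-type comparison of $(\bar F-\hat f,Z-\hat z)$ using continuous dependence for \eqref{HamsysOri} should yield $\bar F(t)-\hat f(t)\to 0$ and $Z(t)-\hat z(t)\to 0$; combined with $F-\bar F\to 0$ in $C^2$ this is exactly \eqref{eqn:conv3}. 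The hardest step is this final Gr\"onwall comparison over the unbounded time interval: because the ODE orbits of \eqref{HamsysOri} are only neutrally stable, being periodic with conserved Hamiltonian, one cannot simply integrate a Lipschitz estimate with exponential blow-up, and moreover Jensen's inequality gives $L(0)\geq|\Omega|H(\overline F_0,\overline z_0)$ strictly whenever $F_0$ is non-constant, so the asymptotic Hamiltonian value along $(\bar F,Z)$ might a priori drift below $H(\hat f(0),\hat z(0))$; ruling out such a Hamiltonian drift and closing the comparison must therefore combine the quantitative $H_1''$-weighted dissipation and Poincar\'e estimates with the detailed structure of the variance error, possibly supplemented by a phase-matching argument in the spirit of Corollary~\ref{Cor}.
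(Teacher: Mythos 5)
Your Lyapunov computation is exactly the paper's: the proof of Theorem~\ref{th3} in the paper consists of verifying that $\mathcal{H}(F,Z)=\int_\Omega H(F,Z)\,dx$ dissipates along the shadow flow with rate $-d_f\int_\Omega H_{ff}|\nabla F|^2\,dx$ (the reaction terms cancel because $Z$ and $H_z(Z)$ are spatially constant, just as in your pairing of the $Z$-equation with $|\Omega|H_2'(Z)$), and then stating that all remaining steps of the proof of Theorem~\ref{thm:1} carry over. Where you diverge is in how the dissipation is exploited: the paper reruns the $\omega$-limit-set machinery (compactness in $C^2$, invariance, the strong maximum principle to keep limit points away from $0$ and $1$, and La Salle's principle forcing $\nabla F\to 0$), whereas you go through Poincar\'e's inequality and $L^1$-in-time control of the variance $\aint_\Omega (F-\bar F)^2\,dx$. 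Both routes deliver the spatial homogenisation $\|F(\cdot,t)-\bar F(t)\|_{C^2}\to 0$; yours is more quantitative, the paper's is softer but reuses already-established machinery.

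The genuine gap is the one you yourself flag at the end: identifying the asymptotic limit with the \emph{specific} ODE trajectory $(\hat f,\hat z)$ launched from $(\overline F_0,\overline z_0)$. Your averaged equation for $\bar F$ and the $L^1$-in-time variance error are correct, but a Gr\"onwall comparison over $[0,\infty)$ cannot be closed for neutrally stable periodic orbits, and your Jensen observation $L(0)\ge |\Omega|\,H(\overline F_0,\overline z_0)$ correctly shows that the limiting Hamiltonian level need not coincide with $H(\hat f_0,\hat z_0)$ when $F_0$ is non-constant. You should be aware that the paper does not resolve this either: following the steps of Theorem~\ref{thm:1} verbatim only yields $\lim_{t\to\infty}\mathrm{dist}_{C^2}((F,Z),\mathcal{O})=0$ for \emph{some} ODE orbit $\mathcal{O}$, i.e.\ orbital convergence, not the trajectory-wise convergence with prescribed initial data that \eqref{eqn:conv3} literally asserts. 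So your proposal reproduces everything the paper actually proves, and the step you leave open is precisely the step the paper's two-line deferral also leaves open; do not expect to find the missing phase-and-level identification argument there. If you want a defensible statement, either weaken the conclusion to orbital convergence (as in \eqref{eqn:convergence}, possibly with a phase shift as in Corollary~\ref{Cor}), or supply the additional argument controlling the Hamiltonian drift caused by the initial Jensen gap.
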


\noindent{\bf Discussion of the results:}
Our paper deals with the asymptotic behaviour of solutions to the PDE model \eqref{HamPDEsys0}. We prove global existence of classical solutions to \eqref{HamPDEsys0} and convergence to the corresponding ODE model \eqref{HamsysOri}. Next, we derive the  interesting Corollary \ref{Cor}, which implies that if we have two PDE solutions of \eqref{HamPDEsys0} that start at different times, asymptotically they will be close in the $C^2$-norm
modulo a suitable phase shift.  

After that, we prove that solutions to \eqref{HamPDEsys0} converge to those of the corresponding shadow system obtained in the limit $d_z\uparrow\infty$ and that solutions of the shadow system converge to those of the corresponding ODE orbit. These results can also be seen as follows: If we start from the PDE system \eqref{HamPDEsys0} but with different initial data, we will get two different solutions that have nevertheless two properties in common. First, they are both attracted from the corresponding shadow system and either they will pass close by or through the solutions of this shadow system. Second is the fact that even though we started with different initial data, asymptotically we will have convergence to the corresponding ODE in both cases as $t\uparrow\infty$ but these two limits may be different, i.e. there could be a phase shift between them. This behaviour is similar to the Lotka-Volterra systems which was noticed in \cite{LSY}.

\noindent{\bf Outline:}
This paper is organised as follows: In Section 2, we recall the modelling background and establish some basic properties of system \eqref{HamPDEsys0}. Theorem \ref{thm:1}, Corollary \ref{Cor}, Theorem \ref{thm:2}, Theorem \ref{thm:3}  are proven in Sections 3, 4, 5, 6, respectively.

\section{Preliminaries: Modelling and Formal Properties}
\label{sec:pre}

Public goods games are generalisations of the prisoner's dilemma to an arbitrary number of players, see e.g. \cite{HS} and the reference therein. 
In the model presented in \cite{HMHS}, $N$ players are chosen randomly from a large population. Every round, these players may either contribute an amount $c$ or nothing at all to a common pool. $\eta_c$ denotes the number of the players who cooperate and $N-\eta_c$ is the number of the players that defect. At every round the common pool is 
increased by an interest rate $r$ and then used to pay back to the players.  
The payoffs for cooperators $P_c$ and defectors $P_d$ are given by 
\[
P_c=-c+rc\frac{\eta_c}{N}, \qquad P_d=rc\frac{\eta_c}{N}, \qquad \text{where} \quad 1<r<N
\]
for the model to be a public goods game, \cite{HS}. However, in this game 
it turns out that defecting is the dominating strategy. 

Hence, the authors of \cite{HMHS} proposed an extended model allowing players to decide whether to participate or not. Those who are unwilling to do so are called "loners" and they will receive a fixed payoff $P_l=\sigma c$ with $0<\sigma<r-1$. The payoff $P_l$ ensures that an entirely cooperating group will profit more than loners while loners will profit than a group solely formed of defectors. The model of \cite{HMHS} thus considers 
three types of persons: the loners (refusing to join the group), the cooperators (who join and contribute) and the defectors (who just join). These groups correspond to payoffs $P_l$, $P_c$, $P_d$ and the relative frequencies of these strategies shall be denoted by $x,y,z$ and satisfy condition that $x+y+z=1$. More precisely, it was derived in  \cite{HMHS} that  
\begin{align*}
P_l&=\sigma,\\
P_d&=\sigma z^{N-1}+r\frac{x}{1-z}\left(1-\frac{1-z^N}{N(1-z)}\right),\\
P_c&=P_d-\underbrace{1+(r-1)z^{N-1}-\frac{r}{N}\frac{1-z^N}{1-z}}_{=:G(z)},
\end{align*}
The sign of $P_d-P_c$, i.e. the sign of the function $G(z)$ plays a key factor in determining whether or not it is better to switch strategy, that is to change from deflection to cooperation or vice versa.

It is straightforward to check that the function $G$ can also written as a polynomial with real coefficients:
\begin{equation}\label{G}
G(z) = 1 + (r-1)z^{N-1} -\frac{r}{N}\frac{1-z^N}{1-z}
= (1-\frac{r}{N}) -\frac{r}{N} \sum_{j=1}^{N-2} z^j + (r-1-\frac{r}{N}) z^{N-1}.
\end{equation} 
Note that for $2<r<N$ those coefficients change sign exactly twice and that Descartes' rule of signs implies that $G(z)$ has either two or zero positive roots. 
In fact, Lemma~\ref{lem:G} below shows that $\lim_{z\to1-}G(z)=0$ from negative values. Hence, since clearly 
$G(0)>0$, the function $G(z)$ undergoes exactly one sign change on $z\in(0,1)$ as in Figure \ref{PlotG} above.

Note that it can be easily verified that when $r\leq2$ then $G(z)$  does not have any root in $(0,1)$ and $G(z)=0\Rightarrow z=1$, which means that defecting is 
the dominate strategy.

By using the constraint $x+y+z=1$, the average payoff
can be written as,
\begin{align*}
\overline{P}&=xP_c+yP_d+zP_l	=\sigma-(1-z^{N-1})\left(
(1-z)\sigma-(r-1)x
\right).
\end{align*}
By introducing $f=\frac{x}{x+y}$ as a new variable
and considering the replicator dynamics $\dot{z}=z(\sigma-\overline{P})$, the authors of \cite{HMHS} obtained the 
following system to be considered for $(f,z)\in[0,1]^2$:
\begin{equation}\label{orig}
\begin{cases}
\dot{f} = -f(1-f)\,G(z),\\
\dot{z} = (\sigma - f (r-1)) \,z(1-z)(1-z^{N-1}).
\end{cases}
\end{equation}
Note that like $G(z)$ changes the sign once, also the factor $(\sigma - f (r-1))$ changes 
its sign once according to the value of $f\in(0,1)$ and \eqref{para}.


For the ODE-system \eqref{orig}, the authors  proved in \cite{HMHS} that for $r\leq2$ there are no fixed points for the system in $(0,1)^2$, while when $r>2$ and $0<\sigma<r-1$ then there exists a unique fixed point in the interior of $(0,1)^2$, which is stable and surrounded by closed orbits. Moreover, they proved that all interior orbits are closed.

In fact, system \eqref{orig} can be written as the generalised Hamiltonian system~\eqref{HamsysOri}.
We remark that in \cite{HMHS}, the authors preformed one further transformation of the system by dividing the right hand side terms by the variable $\phi(f,z)$ as defined in \eqref{HamsysOri}
and then considering the resulting standard Hamiltonian system with a well-known form of prey-predator systems. However, this transformation is not necessary 
for our arguments but would introduce singular 
right hand side terms, for which already the existence of weak solutions to a corresponding PDE model are unclear. 
(It might be possible to define renormalised solutions).

The Hamiltonian, which transforms \eqref{orig} into the  Hamiltonian system~\eqref{HamsysOri}
is given by 
\begin{align}
H(f,z) &:= H_1(f) + H_2(z),\label{H}\\[1mm]
H_1(f)&:= -\sigma \log f - (r-1-\sigma) \log (1-f) \ge 0,\label{H1}\\
H_2(z) &:= - (1-\frac{r}{N}) \log z - (\frac{r}{2}-1) \log (1-z) + R(z) \ge 0,  \label{H2}
\end{align}
where $R(z)$ is defined as a primitive of  $\frac{\partial R}{\partial z}$, which in return is introduced 
by the following definition
\begin{equation}\label{H2z}
 \frac{\partial H_2}{\partial z} = -\frac{(1-\frac{r}{N})}{z} + \frac{(\frac{r}{2}-1)}{1-z} + \frac{\partial R}{\partial z}:= 
-\frac{G(z)}{ z(1-z)(1-z^{N-1})}.
\end{equation}
It can be shown that $\frac{\partial R}{\partial z}$ is a bounded function on $z\in[0,1]$ (see Lemma~\ref{lem:Rzbound} below) and that 
the non-negativity $H_2\ge 0$ follows from choosing a sufficiently large positive integration constant 
in the definition of $R(z)$. 

Before we state further properties of $R$, we note first that 
$$
-\frac{\partial H_1}{\partial f} = \frac{\sigma}{ f} - \frac{r-1-\sigma}{1-f} =  \frac{\sigma - f(r-1) }{f(1-f)}
$$ 
and system \eqref{orig} can be indeed written as a Hamiltonian system of the form \eqref{HamsysOri}, i.e.
\begin{equation*}%
\begin{cases}
\dot{f} = \frac{\partial H_2}{\partial z}\,\phi(f,z),\\[2mm]
\dot{z} = -\frac{\partial H_1}{\partial f}\,\phi(f,z),
\end{cases}
\end{equation*}
with 
$\phi(f,z)=f(1-f)z(1-z)(1-z^{N-1})$ as in \eqref{HamsysOri}.


\begin{lem}\label{lem:G}
The function $G(z)=O(1-z)$ with
$$
\lim_{z\to 1} \frac{-G(z)}{1-z} = \frac{(r-2)(N-1)}{2}.
$$
Hence, Assumption \eqref{para} implies $\lim_{z\to 1} \frac{-G(z)}{1-z}>0$ and, therefore,
$G(z)<0$ for $z$ sufficiently close to 1.
\end{lem}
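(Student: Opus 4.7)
The natural approach is to exploit the geometric-series identity $\frac{1-z^N}{1-z}=\sum_{j=0}^{N-1}z^j$ valid for $z\neq 1$, so that $G$ extends to the polynomial
\[
G(z)=1+(r-1)z^{N-1}-\frac{r}{N}\sum_{j=0}^{N-1}z^j
\]
on all of $\mathbb{R}$. A direct evaluation at $z=1$ gives
\[
G(1)=1+(r-1)-\frac{r}{N}\cdot N=0,
\]
so $G$ has a zero at $z=1$, and hence $G(z)=O(1-z)$ as $z\to 1$; in particular the limit in the statement exists and equals $-G'(1)\cdot(-1)=G'(1)$ after a sign bookkeeping (since $\frac{-G(z)}{1-z}=\frac{G(z)-G(1)}{z-1}\to G'(1)$).

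Next I would compute $G'(1)$ directly from the polynomial form. Differentiating term by term yields
\[
G'(z)=(r-1)(N-1)z^{N-2}-\frac{r}{N}\sum_{j=1}^{N-1}jz^{j-1},
\]
and evaluating at $z=1$ together with the identity $\sum_{j=1}^{N-1}j=\frac{N(N-1)}{2}$ gives
\[
G'(1)=(r-1)(N-1)-\frac{r}{N}\cdot\frac{N(N-1)}{2}=(N-1)\Bigl(r-1-\frac{r}{2}\Bigr)=\frac{(r-2)(N-1)}{2}.
\]
This is precisely the claimed limit. Alternatively, one could expand $G$ in Taylor series around $z=1$, but the straightforward derivative computation is cleaner.

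Finally, to deduce the sign statement I would simply invoke the standing parameter assumption \eqref{para}, which in particular gives $r>2$ and $N\geq 3$, so that $\frac{(r-2)(N-1)}{2}>0$. By continuity of the limit, $\frac{-G(z)}{1-z}>0$ for all $z$ sufficiently close to $1$; since $1-z>0$ for $z<1$, this forces $G(z)<0$ in a left neighbourhood of $1$, as claimed. There is no real obstacle in the argument; the only subtlety worth flagging is the sign accounting between $\frac{-G(z)}{1-z}$ and $\frac{G(z)-G(1)}{z-1}$, which determines that it is $G'(1)$ (and not $-G'(1)$) which appears as the limit.
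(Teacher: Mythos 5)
Your proposal is correct: the polynomial extension of $G$, the evaluation $G(1)=0$, the identification of the limit with $G'(1)$ via the difference quotient, and the final sign bookkeeping are all sound, and the computation $G'(1)=\frac{(r-2)(N-1)}{2}$ matches the claim. The route differs slightly from the paper's: you compute the limit as a derivative at $z=1$, whereas the paper performs an explicit polynomial division to write $\frac{-G(z)}{1-z}=\sum_{j=1}^{N-1}a_j z^{N-1-j}$ with $a_j=r-1-\frac{jr}{N}$ (equivalently $\sum_{k=0}^{N-2}b_k z^k$ with $b_k=-1+\frac{r}{N}(k+1)$) and then sums the coefficients at $z=1$, arriving at the same value $\frac{(r-2)(N-1)}{2}$. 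Your derivative shortcut is cleaner for this lemma in isolation; the paper's choice pays off later, since the explicit quotient representations \eqref{aj} and \eqref{bk}, together with the observation that the coefficients $a_j$ change sign exactly once, are reused in the proof of Lemma~\ref{HzzPosSemDef} and in the discussion of the single sign change of $G$ on $(0,1)$. So both arguments are valid; the paper's is structured to produce intermediate objects needed elsewhere, while yours is the more economical proof of the stated limit alone.
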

\begin{proof}
Straightforward polynomial division shows  
\begin{equation}\label{aj}
\frac{-G(z)}{1-z} = \sum_{j=1}^{N-1} \Bigl(r-1-\frac{j\,r}{N}\Bigr) z^{N-1-j} =: \sum_{j=1}^{N-1} a_j\, z^{N-1-j},
\end{equation}
where the coefficients $a_j:=r-1-\frac{j\,r}{N}$ change sign exactly once between $a_1=r-1-r/N>0$ and $a_{N-1}=-1+r/N<0$,
which reflects the single sign change of $G(z)$. 

Alternatively, we set $N-1-j=k$ and define $b_k = -1 + r/N(k+1)$
for $k=0,\ldots,N-2$ to write
\begin{equation}\label{bk}
\frac{-G(z)}{1-z} = \sum_{k=0}^{N-2} \Bigl(-1+\frac{r}{N}(k+1)\Bigr) z^{k} =: \sum_{k=0}^{N-2} b_k\, z^{k}
\end{equation}
with $b_0=-1+r/N<0$ and $b_{N-2}=r-1-r/N>0$. 

Altogether, $G(z)=O(1-z)$ with
$$
\lim_{z\to 1} \frac{-G(z)}{1-z} = \sum_{j=1}^{N-1} a_j =   \sum_{j=0}^{N-2} b_k = \frac{(r-2)(N-1)}{2}>0
$$
for $r>2$ and thus $G(z)<0$ for $z$ sufficiently close to 1.
\end{proof}

\begin{lem}\label{lem:Rzbound}
The rational function $\frac{\partial R}{\partial z}$ is bounded on $z\in[0,1]$.
\end{lem}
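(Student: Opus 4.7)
The plan is to solve the defining relation \eqref{H2z} directly for $\partial R/\partial z$:
\[
\frac{\partial R}{\partial z} = -\frac{G(z)}{z(1-z)(1-z^{N-1})} + \frac{1-\tfrac{r}{N}}{z} - \frac{\tfrac{r}{2}-1}{1-z}.
\]
The right-hand side is a rational function of $z$, and inside $(0,1)$ the factor $1-z^{N-1}$ has no zeros, so the only candidates for blow-up on $[0,1]$ are the endpoints $z=0$ and $z=1$. It therefore suffices to check that the apparent singularities at those two points cancel.

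At $z=0$ the factor $(1-z)(1-z^{N-1})$ tends to $1$, so the first term behaves like $-G(0)/z = -(1-\tfrac{r}{N})/z$ to leading order; this simple pole is cancelled exactly by $(1-\tfrac{r}{N})/z$, and what remains is regular at $0$ (the regular part can be read off from the Taylor expansion of $G$).

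The more delicate point is $z=1$. I would factor $1-z^{N-1}=(1-z)(1+z+\cdots+z^{N-2})$, which makes a $(1-z)^2$ appear in the denominator, so naively the first term has a double pole. Lemma~\ref{lem:G} saves the day: it establishes that $-G(z)/(1-z)$ is a polynomial, in particular continuous at $z=1$ with value $(r-2)(N-1)/2$. Dividing this out, the first term reduces to a simple pole of the form $\tfrac{r-2}{2(1-z)}$ plus a bounded remainder, and this simple pole is cancelled precisely by $-(\tfrac{r}{2}-1)/(1-z)$.

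The main obstacle is exactly this cancellation at $z=1$: because $z(1-z)(1-z^{N-1})$ vanishes to second order there, one must know the correct leading coefficient of $-G$ near $1$, not merely that $G(1)=0$. Lemma~\ref{lem:G} provides precisely this information, so once it is invoked the argument collapses to a routine bookkeeping of the two endpoint expansions.
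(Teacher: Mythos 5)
Your proof is correct, but it runs along a different track than the paper's. You keep the three terms of \eqref{H2z} separate and cancel singular parts locally at the two endpoints: at $z=0$ the simple pole of $-G(z)/[z(1-z)(1-z^{N-1})]$ has coefficient $-G(0)=-(1-\tfrac{r}{N})$ and is killed by the explicit $\tfrac{1-r/N}{z}$ term, while at $z=1$ you first use Lemma~\ref{lem:G} to knock the double pole down to a simple one with coefficient $\tfrac{(r-2)(N-1)/2}{N-1}=\tfrac{r}{2}-1$, which is killed by $-\tfrac{r/2-1}{1-z}$; since $1+z+\dots+z^{N-2}$ has no zeros in $[0,1]$, these are the only candidate singularities and boundedness follows. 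The paper instead puts everything over the common denominator, computes the resulting numerator polynomial $Q(z)$ explicitly in \eqref{Rone}--\eqref{Q}, and verifies the double cancellation at $z=1$ in one stroke by showing $\lim_{z\to1}Q(z)/(1-z)^2$ is finite, so that $Q=(1-z)^2P(z)$ and $\partial R/\partial z=-P(z)/(1+\sum_{j=1}^{N-2}z^j)$. Your version is computationally lighter and reuses Lemma~\ref{lem:G} rather than redoing a limit computation; the paper's version costs an explicit polynomial division but yields the closed form \eqref{Rprime}, which is what makes the worked examples for $N=3$ and $N=4$ (and the formula for $\partial^2R/\partial z^2$) immediate. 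Both arguments are complete.
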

\begin{proof}
We calculate from \eqref{H2z} 
\begin{align}
\frac{\partial R}{\partial z}&= 
\frac{-1}{ z(1-z)(1-z^{N-1})}\left[G(z)-\Bigl(1-\frac{r}{N}\Bigr)(1-z)(1-z^{N-1}) 
+ \Bigl(\frac{r}{2}-1\Bigr)z(1-z^{N-1})\right]\nonumber \\
&=\frac{-1}{ (1-z)(1-z^{N-1})}\biggl[\underbrace{\frac{r}{2}\Bigl(1-\frac{4}{N}\Bigr)\! - \frac{r}{N}\sum_{j=1}^{N-3} z^j + r\Bigl(1-\frac{2}{N}\Bigr)z^{N-2} - \frac{r}{2}\Bigl(1-\frac{2}{N}\Bigr) z^{N-1} }_ {=:Q(z)}\biggr]\label{Rone}
\end{align}
Moreover, with $Q(z)$ being defined as the square bracket in \eqref{Rone}, i.e.
\begin{equation}\label{Q}
Q(z):= \frac{r}{2}(1-\frac{4}{N}) - \frac{r}{N}\sum_{j=1}^{N-3} z^j + r(1-\frac{2}{N})z^{N-2} - \frac{r}{2}(1-\frac{2}{N}) z^{N-1}
\end{equation}
we compute that 
\begin{equation}
\lim_{z\to 1} \frac{Q(z)}{(1-z)^2} = \frac{r}{12 N}(N-6)(N-2)(N-1) \qquad \Rightarrow \qquad
Q(z) = (1-z)^2 P(z),
\end{equation}
where $P(z)$ is a polynomial in $z$ of order $N-3$.
Thus, we have 
\begin{equation}\label{Rprime}
\frac{\partial R}{\partial z} = \frac{-P(z)}{1+\sum_{j=1}^{N-2}z^j},
\end{equation}
which is a bounded rational function on $z\in[0,1]$.
\end{proof}
\medskip

\subsection{Positive definiteness of the Hamiltonian}

In the following, we need that the Hessian of the Hamiltonian $H=H_1(f)+H_2(z)$, i.e. 
\begin{equation}\label{HessMatrHam}
D^2 H = 
\begin{pmatrix}
\frac{\partial^2 H_1}{\partial f^2} = \frac{\sigma}{f} + \frac{r-1-\sigma}{(1-f)^2}>0 & 0\\ 
0 & 
\frac{\partial^2 H_2}{\partial z^2} = \frac{(1-\frac{r}{N})}{z^2} + \frac{(\frac{r}{2}-1)}{(1-z)^2} + \frac{\partial^2 R}{\partial z^2}
\end{pmatrix}>0
\end{equation}
is a positive definite matrix. This is obviously true if and only if $\frac{\partial^2 H_2}{\partial z^2}> 0$.
As example, for $N=3$, we calculate especially
\begin{align*}
\frac{\partial R}{\partial z} = 
\frac{-1}{ (1-z)^2(1+z)}\left[-\frac{r}{6} + \frac{r}{3}z - \frac{r}{6} z^{2} \right]
= \frac{r}{6}\frac{1}{1+z}
\qquad\text{and}\qquad\frac{\partial^2 R}{\partial z^2} =-\frac{r}{6}\frac{1}{(1+z)^2}
\end{align*}
and obtain
\begin{equation}\label{N3}
N=3: \quad \frac{\partial^2 H_2}{\partial z^2} 
= \frac{(1-\frac{r}{3})}{z^2} + \frac{(\frac{r}{2}-1)}{(1-z)^2} -\frac{r}{6}\frac{1}{(1+z)^2}
>
\frac{-\frac{r}{3}+\frac{r}{2}}{(1+z)^2} 
-\frac{\frac{r}{6}}{(1+z)^2}=0
\end{equation}
and hence positive definiteness for all $z\in[0,1]$. 
For $N=4$, we obtain also positive definiteness since
\begin{align*}
\frac{\partial R}{\partial z} 
= \frac{r}{4}\frac{z}{1+z+z^2}
\qquad
\text{and} 
\qquad
\frac{\partial^2 R}{\partial z^2} =\frac{r}{4}\frac{1-z^2}{(1+z+z^2)^2}>0,
\end{align*}
which implies
\begin{equation}\label{N4}
N=4:\quad \frac{\partial^2 H_2}{\partial z^2} 
= \frac{(1-\frac{r}{4})}{z^2} + \frac{(\frac{r}{2}-1)}{(1-z)^2} + \frac{\partial^2 R}{\partial z^2}>0, 
\qquad \text{for all}\quad z\in[0,1].
\end{equation}

However, the following Lemma~\ref{HzzPosSemDef} proves not only sufficient conditions for the positive definiteness of the Hessian $D^2 H$, but also that 
$\frac{\partial^2 H_2}{\partial z^2}<0$ is possible. 
\begin{lem}[Positive definiteness of the Hessian of the Hamiltonian]\label{HzzPosSemDef}\hfill\\
Let $r$ satisfy $\max\{\frac{N}{3},2\}<r<N$ for $N\geq3$. Then, 
\begin{equation}\label{H2zz}
\frac{\partial^2 H_2}{\partial z^2}
 >0, \qquad \forall z\in[0,1].
\end{equation}
On the other hand, for $r$ close to 2 and $N$ large, we find $\frac{\partial^2 H_2}{\partial z^2} <0$.
\end{lem}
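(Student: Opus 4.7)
Since the Hessian of $H=H_1+H_2$ is diagonal and its $(1,1)$-entry $\frac{\partial^2 H_1}{\partial f^2}$ is manifestly positive on $(0,1)$ for $0<\sigma<r-1$, positive definiteness is equivalent to the scalar statement $\frac{\partial^2 H_2}{\partial z^2}>0$, and likewise the second half of the lemma reduces to exhibiting $z_0\in(0,1)$ with $\frac{\partial^2 H_2}{\partial z^2}(z_0)<0$. The plan is therefore to analyse the single function $\frac{\partial^2 H_2}{\partial z^2}$.

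For the positive-definiteness assertion, my approach is to start from the simplified identity
\[
\frac{\partial H_2}{\partial z}=\frac{U(z)}{z(1-z^{N-1})},\qquad U(z):=\sum_{k=0}^{N-2} b_k z^k,\qquad b_k:=\frac{r(k+1)}{N}-1,
\]
obtained from the factorisation $-G(z)=(1-z)U(z)$ of Lemma~\ref{lem:G} after cancelling the common factor $(1-z)$ in \eqref{H2z}. A further differentiation gives
\[
\frac{\partial^2 H_2}{\partial z^2}(z) = \frac{\mathcal{N}(z)}{z^2(1-z^{N-1})^2},
\]
where $\mathcal{N}$ is an explicit polynomial of degree $2N-3$ whose coefficients are linear combinations of the $b_k$. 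The pivotal observation is that $b_k>0$ for all $k\ge 2$ is exactly equivalent to $r>N/3$; this is what motivates the hypothesis. Under this hypothesis most coefficients of $\mathcal{N}(z)$ are non-negative, the only exceptions coming from $b_0<0$ (and possibly $b_1$). The remaining negative contributions, notably the term $Nb_0\, z^{N-1}$, have to be absorbed into the positive ones via a pairing argument using the pointwise inequality $z^j\le z^\ell$ on $[0,1]$ for $j\ge \ell$.

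The main obstacle will be precisely this algebraic bookkeeping: showing, under $r>N/3$, that the positive contributions of $\mathcal{N}(z)$ dominate $Nb_0\,z^{N-1}$ uniformly on $[0,1]$. A more hands-on alternative, generalising the computation already sketched for $N=3,4$ in \eqref{N3}--\eqref{N4}, would be to bound $\bigl|\frac{\partial^2 R}{\partial z^2}\bigr|$ from above by an expression of the form $C/W(z)^2$ with $W$ suitably chosen, and then use the elementary inequalities $1/z^2\ge 1/W(z)^2$ and $1/(1-z)^2\ge 1/W(z)^2$ on $[0,1]$ together with $(1-r/N)+(r/2-1)=r(N-2)/(2N)$ as the total positive mass that has to dominate $C$.

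For the negativity assertion I would use the closed form \eqref{Rone} and pass to the asymptotic limit $r=2$, $N\to\infty$: the numerator $Q(z)$ from \eqref{Q} converges pointwise on any compact subinterval of $(0,1)$ to $1$, since $(2/N)\sum_{j=1}^{N-3}z^j\to 0$ and $z^{N-2},z^{N-1}\to 0$, while $(1-z)(1-z^{N-1})\to (1-z)$. This yields $\frac{\partial R}{\partial z}\to -\frac{1}{1-z}$ and, by a parallel computation (or by differentiating the limit once local uniform convergence on compact subsets of $(0,1)$ is checked), $\frac{\partial^2 R}{\partial z^2}\to -\frac{1}{(1-z)^2}$. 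Combined with $(1-r/N)/z^2\to 1/z^2$ and $(r/2-1)/(1-z)^2\to 0$, this gives $\frac{\partial^2 H_2}{\partial z^2}(z)\to \frac{1}{z^2}-\frac{1}{(1-z)^2}$, which is strictly negative for every $z>1/2$. For instance at $z_0=3/4$ the limiting value equals $16/9-16=-128/9<0$, so by continuity of $\frac{\partial^2 H_2}{\partial z^2}$ in $(r,N)$ the same strict inequality persists for $r$ close to $2$ and $N$ sufficiently large, yielding failure of positive definiteness at $z_0$.
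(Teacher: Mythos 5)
Your setup for the positivity claim coincides with the paper's: the same representation $\frac{\partial^2 H_2}{\partial z^2}=S(z)/\bigl[z^2(1-z^{N-1})^2\bigr]$ with a numerator built from the coefficients $b_k=\frac{r}{N}(k+1)-1$, and the same observation that $r>N/3$ is exactly the condition $b_k>0$ for all $k\ge 2$. But the step you defer as ``the main obstacle'' --- showing that the nonnegative part of $S$ absorbs $b_0(Nz^{N-1}-1)$ and $b_1(N-1)z^{N}$ uniformly on $[0,1]$ --- \emph{is} the lemma; everything before it is a reformulation. This absorption is genuinely delicate, not routine bookkeeping: the term $b_0(Nz^{N-1}-1)$ changes sign at $z^*=N^{-1/(N-1)}$, which tends to $1$ as $N$ grows, so the dangerous region concentrates near $z=1$ where all competing monomials vanish simultaneously and one must track orders of vanishing in $(1-z)$. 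The paper's proof accordingly needs two regimes ($\frac{N}{2}\le r<N$, where only $b_0<0$, versus $\frac{N}{3}<r<\frac{N}{2}$, where $b_1<0$ as well), a further splitting of $[0,1]$ at $z^*$, a carefully selected six-term partial sum $S_p$, monotonicity in $r$ to reduce to the endpoint value of $r$, and explicit factorisations of the resulting polynomials as $(1-z)$ or $(1-z)^2$ times manifestly nonnegative expressions. A bare pairing via $z^j\le z^{\ell}$ does not by itself produce these cancellations, and your ``hands-on alternative'' is also problematic: $\partial^2 R/\partial z^2$ changes sign as $N$ varies (compare $N=3$ with the large-$N$ limit you compute yourself), so a single uniform bound $|\partial^2R/\partial z^2|\le C/W(z)^2$ dominated by $(1-\frac{r}{N})/z^2+(\frac{r}{2}-1)/(1-z)^2$ is not obviously available. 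As written, the positivity half of the lemma is a plan, not a proof.

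Your argument for the second assertion, by contrast, is sound and in fact more rigorous than the paper's, which only exhibits a numerical plot. The pointwise limits $Q\to 1$, $\partial R/\partial z\to -\tfrac{1}{1-z}$, and (after checking locally uniform convergence of these explicit rational functions on compact subsets of $(0,1)$, which does hold) $\partial^2R/\partial z^2\to -\tfrac{1}{(1-z)^2}$ give $\partial^2H_2/\partial z^2\to \tfrac{1}{z^2}-\tfrac{1}{(1-z)^2}$, which is negative at $z_0=3/4$; hence negativity persists for $r$ near $2$ and $N$ sufficiently large. Keep that part; the first part needs the actual estimates supplied.
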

\begin{proof}
The proof applies different estimates on two intervals 	for $r,$ first $\frac{N}{2}\le r<N$ and secondly $\max\{\frac{N}{3},2\}<r<\frac{N}{2}$. The presentation of the proof will be divided accordingly. 

We will begin our analysis for $\frac{N}{2}\le r<N$. By differentiating \eqref{H2z} with respect to $z$ 
and using the representation \eqref{bk}, 
we derive the following formula for the second derivative of $H_2$: 
\begin{align}\label{H2zz2}
\frac{\partial^2H_2}{\partial z^2}
&=
\frac{S(z)}{z^2(1-z^{N-1})^2}
\end{align}
with 
\be\label{S(z)}
S(z):=
\sum_{k=1}^{N-2}b_k(k-1)z^k+b_0(Nz^{N-1}-1)
+
\sum_{k=1}^{N-2}b_k(N-k)z^{N+k-1}
\ee
and
\begin{equation*}
b_k=-1+\frac{r}{N}(k+1) 
\end{equation*}
as defined in \eqref{bk}.
We notice in the range $\frac{N}{2}\le r<N$ that 
$$
b_0=\frac{r}{N}-1<0,\qquad\text{but}\qquad
b_1,\ldots,b_{N-2}\ge 0.
$$

Hence, in \eqref{S(z)} the only negative term is the middle one, i.e. 
$b_0(Nz^{N-1}-1)$, while the two sums in \eqref{S(z)} contain only non-negative terms for $\frac{N}{2}\le r<N$. Therefore, in order to prove $\frac{\partial^2H_2}{\partial z^2}>0$ in \eqref{H2zz2}, it is sufficient to prove
\be\label{SuffCond}
b_{N-2}(N-3)z^{N-2}+b_0(Nz^{N-1}-1)>0,\quad \text{on}\quad z\in(0,1)
\qquad\text{and}\qquad S(z)=O(1-z),
\ee
where the first term on the above relation is just the last term of the first sum of \eqref{S(z)}. Relation \eqref{SuffCond} with recalling $b_{N-2}$ from \eqref{bk} can be rewritten as 
\begin{multline}\label{bb}
b_{N-2}(N-3)z^{N-2}+b_0(Nz^{N-1}-1)\\
	=\Bigl(r\frac{N-1}{N}-1\Bigr)(N-3)z^{N-2}+\Bigl(\frac{r}{N}-1\Bigr)(N-1)z^{N-1}+\underbrace{\Bigl(\frac{r}{N}-1\Bigr)(z^{N-1}-1)}_{=O(1-z)\ge0}.
\end{multline}	
Since the last term is non-negative, it is sufficient to show 
\begin{equation}\label{bbb}
z^{N-2}\biggl[\Bigl(r\frac{N-1}{N}-1\Bigr)(N-3) - \underbrace{\Bigl(1-\frac{r}{N}\Bigr)(N-1)}_{\ge0} z\biggr] > 0,\qquad \text{on}\quad z\in(0,1).
\end{equation}
By observing that the above bracket is monotone increasing in $r$ for $N\ge3$, we can estimate further below by setting $r=\frac{N}{2}$ and $z=1$ to obtain
\begin{equation*}
\left[\Bigl(\frac{N-1}{2}-1\Bigr)(N-3)-\frac{N-1}{2}\right]=\frac{(N-5)(N-2)}{2}\ge 0, \qquad \text{for}\quad N\ge5.
\end{equation*}
The above estimates proves that the left hand side of \eqref{bbb} is $O(1)$ as $z\to1$ for $N\ge6$ while it is $O(1-z)$ for $N=5$. Consequentially, the term \eqref{bb} is also $O(1-z)$.
Hence, this is sufficient to prove \eqref{SuffCond}
in the considered range $\frac{N}{2}\le r<N$ for $N\ge5$.
Moreover, since positive definiteness for $N=3$ and $N=4$ was already shown in \eqref{N3} and \eqref{N4}, respectively, this completes the proof in the range $\frac{N}{2}\le r<N$.
\medskip

Now, we will treat the second range $\max\{2,\frac{N}{3}\}<r<\frac{N}{2}$, which is more technical. 
Since positive definiteness for $N=3$ and $N=4$ was already shown in \eqref{N3} and \eqref{N4}, we shall furthermore assume $N\ge5$.
In fact, for $N\ge5$, we are able to treat the 
range $\frac{N}{3}<r<\frac{N}{2}$
which implies $\max\{2,\frac{N}{3}\}<r<\frac{N}{2}$.
We consider 
\begin{equation}
	\label{S(z)2}
S(z)=
\underbrace{\sum_{k=2}^{N-2}b_k(k-1)z^k}_{\displaystyle=:S_1(z)}
+
\underbrace{b_0(Nz^{N-1}-1)}_{\displaystyle=:S_2(z)}
+
\underbrace{b_1(N-1)z^N}_{\displaystyle=:S_3(z)}
+
\underbrace{\sum_{k=2}^{N-2}b_k(N-k)z^{N+k-1}}_{\displaystyle=:S_4(z)}.
\end{equation}

Next, we observe that in the range $\frac{N}{3}<r<\frac{N}{2}$ holds 
$$
b_0=-1+\frac{r}{N}<0,
\quad b_1=-1+\frac{2r}{N}<0,
\qquad b_k=-1+\frac{r}{N}(k+1)>0,\quad \forall k\geq2.
$$
therefore
$$
S_1(z)\geq0,\qquad S_3(z)\leq0,\qquad S_4(z)\geq0.
$$
The second term $S_2(z)$ changes sign on $z\in[0,1]$ from a positive constant (at $z=0$) to a negative value (at $z=1$) with a single root.
In order to control $S_2$ and $S_3$ and show \eqref{H2zz2}, it turns out that we  only need to consider the last two terms of both $S_1$ and $S_4$. 
Hence, we shall prove the non-negativity of the following partial sum, denoted by $S_p(z)$, as a sufficient condition 
\begin{equation}\label{range2}
\begin{aligned}
\text{on}\ z\in[0,1): \quad &0<S_p(z,N,r):=
 \underbrace{b_{N-2}(N-3)z^{N-2}}_{\displaystyle=:S_{1a}}
+\underbrace{b_{N-3}(N-4)z^{N-3}}_{\displaystyle=:S_{1b}}\\
&\qquad\qquad\qquad\quad\ \ +S_2+S_3 +\underbrace{2b_{N-2}z^{2N-3}}_{\displaystyle=:S_{4a}}
+\underbrace{3b_{N-3}z^{2(N-2)}}_{\displaystyle=:S_{4b}}. \\
\text{as}\ z\to\ 1: \quad &0 \le S_p(z,N,r) = O((1-z)^2) \quad \text{with}\quad \lim_{z\to 1-} \frac{O((1-z)^2)}{(1-z)^2}>0. 
\end{aligned}
\end{equation}
From these six expressions only $S_2$ and $S_3$ can be negative. More precisely,
$$
S_{1a},S_{1b},S_{4a},S_{4b}\geq0,\qquad S_3\leq0, 
\qquad 
S_2(z) =\begin{cases} >0 & z\in[0,z^*), \\
<0 & z\in(z^*,1], 
\end{cases}
\quad z^*=\left(\frac{1}{N}\right)^\frac{1}{N-1}.
$$ 

Accordingly, in order to prove \eqref{range2}, 
we split the interval $z\in[0,1]$ into the intervals $\mathrm{I}=[0,z^*]$, where instead for proving \eqref{range2}, it will be sufficient to show $S_{1a}+S_{1b}+S_3>0$ on $z\in(0,z^*]$
and the interval $\mathrm{II}=[z^*,1]$, where we requires all six terms of $S_p$ to prove the sufficient condition \eqref{range2}.

\underline{We begin with the first interval $\mathrm{I}=[0,z^*]$.}
Since $S_2\ge0$ on $\mathrm{I}$, it is sufficient to show $S_{1a}+S_{1b}+S_3>0$ on $z\in(0,z^*]$, i.e. 
\begin{equation}\label{NNN}
\left[r\Bigl(1-\frac1N\Bigr)-1
\right](N-3)z^{N-2}
+\left[
r\Bigl(1-\frac2N\Bigr)-1
\right](N-4)z^{N-3}
-\Bigl(1-\frac{2r}{N}\Bigr)(N-1)z^n>0.
\end{equation}
First, we observe that these three terms are all monotone increasing in $r$ for $N\ge4$. With $r>\max\{\frac{N}{3},2\}\ge2$, 
it is thus sufficient to set $r=2$ in \eqref{NNN} in order to prove \eqref{range2} 
on the interval $\mathrm{I}$. Hence, after multiplying with $N$, we obtain the sufficient condition 
$$
(N-2)(N-3)z^{N-2}+(N-4)^2 z^{N-3}-(N-4)(N-1)z^N>0, 
$$
Furthermore, since $z^{N-2},z^{N-3}\ge z^{N}$, it sufficient to show 
$$
(N-4)^2+(N-2)(N-3)-(N-4)(N-1)\ge (N-4)^2+2>0,
$$
which proves \eqref{NNN}. Finally, since $S_p(0,N,r)=-b_0>0$, we have \eqref{range2} in the interval $\mathrm{I}$.

\underline{We continue with the second interval $\mathrm{II}=[z^*,1]$}. Here, since $Nz^{N-1}-1\ge0$ and $S_2\le0$,
the fact that the coefficients $b_k$ in \eqref{S(z)2}
are monotone increasing in $r$ implies that all six terms in  
$S_p(z,N,r)$ are monotone increasing in $r$. 
Thus, it is sufficient to prove $S_p(z,N,2)>0$:
\begin{align*}
	S_p(z,N,2)&=\left[2\Bigl(1-\frac1N\Bigr)-1\right](N-3)z^{N-2}+\left[2\Bigl(1-\frac2N\Bigr)-1\right](N-4)z^{N-3}
	\nonumber\\
&\quad 	-\Bigl[1-\frac2N\Bigr](Nz^{N-1}-1)
	-\Bigl[1-\frac4N\Bigr](N-1) z^{N} 
	\nonumber\\
	&\quad +2\left[2\Bigl(1-\frac1N\Bigr)-1\right]z^{2N-3}
	+3\left[2\Bigl(1-\frac2N\Bigr)-1\right]z^{2N-4}.\nonumber
\end{align*}
By collecting the term proportional to 
$\bigl[1-\frac2N\bigr]$ and $\bigl[1-\frac4N\bigr]$, respectively, we obtain
\begin{align*}
S_p(z,N,2)&=\Bigl[1-\frac4N\Bigr] 
\underbrace{\left[(N-4)z^{N-3}-(N-1) z^{N}+3z^{2N-4}\right]}_{=z^{N-3}\left[(N-1)(1-z^3)-3(1-z^{N-1})\right]=:z^{N-3}R_1}
\\
&\quad+\Bigl[1-\frac2N\Bigr] \underbrace{\left[
(N-3)z^{N-2}-(Nz^{N-1}-1)
+2z^{2N-3}\right]}_{=N z^{N-2}(1-z)-2z^{N-2}(1-z^{N-1})+1-z^{N-2}=:R_2}.
\end{align*}
Hence, for $N\ge5$, 
we estimate first
\begin{align*}
R_1&=(1-z)\Bigl[(N-1)(1+z+z^2)-3(1+z+z^2+\underbrace{z^3+\ldots + z^{N-2}}_{\le N-4})\Bigr]\\
&\ge (1-z)\left[(N-4)(1+z+z^2)-3 z^3 (N-4)\right]\\
&=(1-z)(N-4)\left[1+z+z^2-3 z^3\right]
=(1-z)^2(N-4)(1+2z+3z^2)\ge0,
\end{align*}
which implies the strict inequality $R_1>0$ for all $z\in[z^*,1)$ and $\lim_{z\to1-} \frac{R_1}{(1-z)^2}>0$.
Secondly, we calculate
\begin{align*}
R_2&= (1-z)\left[N z^{N-2}-2z^{N-2}(1+\ldots+z^{N-2})+(1+\ldots+z^{N-3})\right]\\
&= (1-z)\left[(N-2) z^{N-2}-2z^{N-1}(1+\ldots+z^{N-3})+(1+\ldots+z^{N-3})\right]\\
&=(1-z)\left[(N-2) z^{N-2}+(1+\ldots+z^{N-3})(1-z^{N-1})-z^{N-1}(1+\ldots+z^{N-3})\right]\\
&=(1-z)\bigl[(1+\ldots+z^{N-3})(1-z^{N-1})+z^{N-2}\bigl[(N-2)-\underbrace{(z+\ldots+z^{N-2})}_{\le N-2}\bigr]\bigr]\ge0,
\end{align*}
which implies the strict inequality $R_2>0$ for all $z\in[z^*,1)$ and $\lim_{z\to1-} \frac{R_2}{(1-z)^2}>0$.
This proves \eqref{range2} on the Interval 
$\mathrm{II}$.

Finally, Figure~\ref{counter} illustrates in the limiting case $r=2$ the sign of 
$\frac{\partial^2H_2}{\partial z^2}$ (by plotting 
$\frac{\partial^2H_2}{\partial z^2}\times N(z-1)^2 z^2 (z-z^N)^2$ in order to avoid plotting singularities at $z=0,1$).
We observe that for $16\lesssim N$, there are values near $z\sim 0.7$ where $\frac{\partial^2H_2}{\partial z^2}<0$.
\begin{figure}\begin{center}
\includegraphics{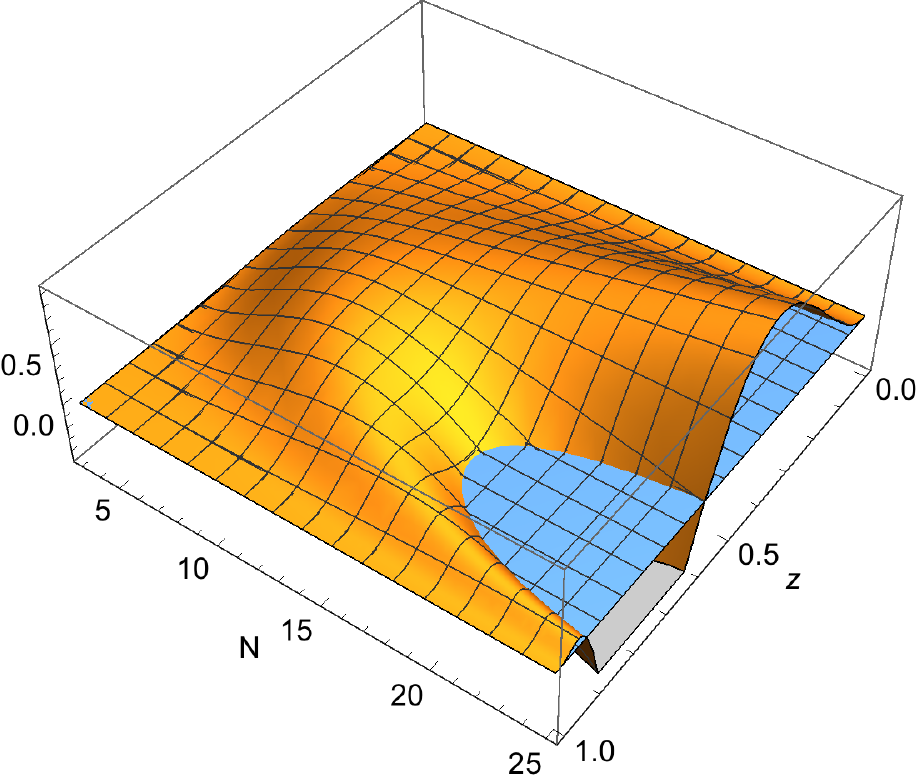}
\caption{\label{counter}The sign of $\frac{\partial^2H_2}{\partial z^2}$ in the limiting case $r=2$ as function of 
$N$ and $z$. For $16\lesssim N$, there is a blue region near $z\sim0.7$ where $\frac{\partial^2H_2}{\partial z^2}<0$.}
\end{center}
\end{figure}
\end{proof}

\section{Proof of Theorem \ref{thm:1}}
We begin by a brief outline the proof of Theorem \ref{thm:1}: First, we reformulate \eqref{HamPDEsys0}
in terms of the Hamiltonian structure of ODE model 
\eqref{HamsysOri}. Then, we notice that the spatially integrated ODE Hamiltonian constitutes also 
a Ljapunov functional for the PDE system \eqref{HamPDEsys0}, i.e. 
\begin{equation}\label{PDEHam}
\mathcal{H}(f,z)(t):=\int_\Omega H(f(x,t),z(x,t))\,dx
\qquad \Rightarrow \qquad \frac{d}{dt}\mathcal{H}\le 0
\end{equation} 
Next, we prove that the $\omega$-limit set is non-empty, compact and connected.
Afterwards, for trajectories $\tilde{w}$ of the $\omega$-limit set, we show that $\int_\Omega H(\tilde{w})\;dx$ is  well defined.
and we get that the $\omega$-limit set is invariant. Finally, we notice that asymptotically, we have a spatially homogeneous and periodic in time orbit.

\medskip
\noindent{\bf Hamiltonian structure and global classical solutions:}
%
%
%
Recalling the ODE model \eqref{HamsysOri}, we rewrite 
the PDE system \eqref{HamPDEsys0} as
\begin{equation}\label{New_HamPDEsys0}
\begin{cases}
f_t-d_f\Delta f= \phi(f,z) \,H_z,\\
z_t-d_z\Delta z=-\phi(f,z) \,H_f,\\
\frac{\partial}{\partial\nu}(f,z)|_{\partial\Omega}=0,
\end{cases}
\end{equation}
where we denote
$$
H_f = \frac{\partial H}{\partial f}= \frac{\partial H_1}{\partial f},\qquad\text{and}\qquad
H_z = \frac{\partial H}{\partial z}=\frac{\partial H_2}{\partial z}.
$$
Straightforward calculation yields  
\begin{align}
\frac{d}{dt} \mathcal{H}(f,z) &= \frac{d}{dt}\int_\Omega H(f,z)\;dx=\int_\Omega (H_ff_t+H_zz_t)\;dx	
\nonumber\\
&=\int_\Omega (H_f \phi H_z+d_fH_f\Delta f- H_z\phi H_f+d_zH_z\Delta z)\;dx
\nonumber\\
&=-\int_\Omega(d_fH_{ff}|\nabla f|^2+d_zH_{zz}|\nabla z|^2)\;dx<0, \qquad \text{for all } \ (\nabla f, \nabla z)\neq 0.
\label{HaMon}
\end{align}
where \eqref{HaMon} holds provided that $H_{zz}>0$  
(see e.g. Lemma \ref{HzzPosSemDef}) because 
$$
H_{ff}=\frac{\sigma}{f} + \frac{r-1-\sigma}{(1-f)^2}>0,\qquad\text{due to}\quad \sigma<r-1.
$$
Hence, $\mathcal{H}(f,z)$ constitutes a Ljapunov functional for the PDE system \eqref{HamPDEsys0}. Moreover, the Ljapunov functional $\mathcal{H}(f,z)\ge0$ is non-negative since $H_1\ge0$ and $H_2\ge0$.

Next, we remark that global-in-time solutions 
to the PDE system \eqref{HamPDEsys0} follow from 
standard parabolic theory (e.g. invariant regions, see \cite{SmollerBook}) or weak comparison principle arguments ensuring $0\le f(x,t),z(x,t) \le 1$ a.e. $x\in\Omega$ for all $t\ge 0$ 
provided that  $0\le f(x,0),z(x,0) \le 1$.
Moreover, standard parabolic regularity implies classical $C^2$ solutions due to the assumed regularity of $\partial\Omega$.
\medskip

\noindent{\bf Orbits and $\omega$-limit set:}
We define the solutions' orbits of the PDE system \eqref{HamPDEsys0}:
$$
\mathcal{O}
=\left\{w(\cdot,t)\right\}_{t\geq0}=
\left\{(f(\cdot,t),z(\cdot,t))\right\}_{t\geq0}\subset
(C^2(\overline{\Omega}))^2,
$$ 
which is compact and connected in $(C^2(\overline{\Omega}))^2$.

Define
$
w=(f,z),\ w=w(\cdot,t)\in\mathbb{R}^2,\ w|_{t=0}=w_0\geq0
$
and the $\omega-$limit set:
$$
\omega(w_0)=\left\{
w_*=(f_*,z_*)\ |\ \exists t_k\uparrow+\infty:\|(f(\cdot,t_k),z(\cdot,t_k))-(f_*,z_*)\|_{C^2(\Omega)}=0
\right\}.
$$
and $w_*:=(f_*,z_*)$ is the limit of $w(\cdot,t_k)=(f(\cdot,t_k),z(\cdot,t_k))$ as $t_k\to\infty$.

Since the Hamiltonian is monotone decreasing, see \eqref{HaMon}, from general parabolic theory we get that 
$\emptyset\neq\omega(w_0)\subset(C^2(\overline{\Omega}))^2 $ is compact and  connected, therefore a semi-flow (since it is defined only for non-negative values of $t$) is well defined on $\omega(w_0)$. Moreover, $\omega(w_0)$ is invariant under this flow: Taking an element from the $\omega-$limit set, $\tilde{w_0}\in\omega(w_0)$, we define a solution $\tilde{w}=\tilde{w}(\cdot,t)$ for $t\ge0$ with $\tilde{w}|_{t=0}=\tilde{w_0}$. This solution  $\tilde{w}$ also exists globally in time and  $\tilde{w}(\cdot,t)\in \omega(w_0)$.

{At this point we can see that $\mathcal{H}(w)=\int_\Omega H(w)dx$ is not necessarily well-defined for $w=w_\infty\in\omega(w_0)$ since we only know $0\leq w_\infty\leq1$. In order to exclude that possibility, we consider 
for any $w_\infty \in \omega(w_0)$
a solution ${w}={w}(\cdot,t)$ to the PDE system \eqref{HamPDEsys0} for $t\ge 0$ with $\lim_{t\to\infty}{w}=w_\infty$. 
Since we assumed initial data with $\mathcal{H}(w(\cdot,0))\leq C$, by using Fatou's Lemma, the following arguments shows $w_\infty\not\equiv0,1$: First, since}
$$
w(\cdot,t_k)\to w_\infty\quad\text{uniformly on}\ C^2(\overline{\Omega})\times C^2(\overline{\Omega})
$$
and
$$
\int_\Omega H(w(x,t_k))\;dx<+\infty
$$
we get 
$$
\int_\Omega H(w_\infty)\leq \liminf_{k\to\infty}\int_\Omega H(w(x,t_k))dx<+\infty.
$$
With $H_1$ and $H_2$ being non-negative, see \eqref{H1},\eqref{H2}, this implies 
$$
\int_\Omega H_1(f_\infty(x))\;dx, \int_\Omega H_2(z_\infty(x))\;dx<+\infty
$$
The first bound writes as 
$$
 \int_\Omega (-\sigma \log f_\infty - (r-1-\sigma) \log (1-f_\infty))\;dx<+\infty
$$
and since both terms $-\sigma \log f_\infty$ and $(r-1-\sigma) \log (1-f_\infty)$ are non-negative, we get $$
 -\int_\Omega  \log f_\infty\;dx<+\infty \quad\Rightarrow\quad f_\infty\not\equiv0
$$
and from
$$
- \int_\Omega \log (1-f_\infty)\;dx<+\infty  \quad\Rightarrow\quad f_\infty\not\equiv1.
$$
similarly we derive that $z_\infty\not\equiv0,1$ and thus conclude that $w_\infty\not\equiv0,1$.

{Returning to the above solution $\tilde{w}=\tilde{w}(\cdot,t)$ subject to $\tilde{w}|_{t=0}=\tilde{w_0}\in\omega(w_0)$,
it follows that at some points $x\in \Omega$, the initial data  
 $\tilde{w_0}=(f_\infty,z_\infty)$ satisfy $f_\infty(x), z_\infty(x) >0$ and $f_\infty(x), z_\infty(x) <1$.}
Thus, the strong maximum principle 
implies that for all $t>0$ holds $0<\tilde{w}(\cdot,t)<1$ and thus $\mathcal{H}(\tilde{w})$ is a well defined for $t>0$. 

Next, La-Salle's principle implies that $\mathcal{H}(\tilde{w}(\cdot,t))$ is invariant and
$$
\frac{d}{dt}\mathcal{H}(\tilde{w}(\cdot,t))=0,\quad t>0,
$$
Thus, $\tilde{w}$ is spatially homogeneous, i.e. $\nabla \tilde{w}(\cdot,t)=0$ by \eqref{HaMon} 
and parabolic smoothness, and by letting $t\downarrow0$ we get that $\nabla w_\infty=0$. Therefore, $\omega(w_0)\subset \{w_\infty\in\mathbb{R}^2|0<w_\infty<1\}$.

{Next, we proceed as in \cite{KarSuzYam} since we have an asymptotically spatial homogeneous and periodic-in-time orbit (recall that spatial homogeneous solutions $\tilde{w}$ are solutions to the ODE-model~\ref{orig} and hence periodic, see  \cite{HMHS}) } and then from \eqref{HaMon} and parabolic regularity we get that 
\begin{equation}\label{NablaC1}
\lim_{t\uparrow\infty}
\|\nabla w(\cdot,t)\|_{C^1}
=0
\end{equation}
and thus
the convergence \eqref{eqn:convergence}, i.e. 
$$
\lim_{t\uparrow+\infty}\mbox{dist}_{C^2}(w, {\cal O})=0.
$$

\section{Proof of Corollary \ref{Cor}}
The proof of Corollary \ref{Cor} is based on the ideas of the second part of Theorem 1.1 in \cite{KarSuzYam}, 
which we outline for the convenience of the reader. 
First, we prove the following
\medskip

\noindent{\bf Claim 1:} 
Under the assumptions of Theorem \ref{thm:1}, each sequence $t_k\uparrow\infty$ admits {a subsequence} $\{t_k'\}\subset\{t_k\}$ and a solution $(\tilde f,\tilde z)$ of the ODE model \eqref{orig}, such that ${\cal O}=\{ (\tilde f(t), \tilde z(t))\}_{t\in\mathbb{R}}$ and 
\begin{equation}\label{conv0}
\lim_{k\to+\infty}\sup_{t\in[-T,T]}\|
(f(\cdot,t+t_k'),z(\cdot,t+t_k'))
-
(\tilde f(t),\tilde z(t))
\|_{C^2}=0	
\end{equation}
for any $T>0$.

We begin by observing that the previous Theorem \ref{thm:1} and parabolic 
regularity implies {for any solution 
$(f(\cdot,t),z(\cdot,t))$ to \eqref{HamPDEsys0}}
the existance of a constant $C>$ such that 
for positive times, for instance, for $t\geq1$ holds
$$
\|f_t(\cdot,t)\|_{C^2}+\|z_t(\cdot,t)\|_{C^2}\leq C,\qquad\forall t\geq1.
$$
Then, by the theorem of Ascoli-Arzel\'a, the sequence $\{t_k\}\uparrow\infty $ admits a subsequence $\{t_k'\}\subset\{t_k\}$ and a solution $(\hat f(\cdot,t),\hat z(\cdot,t))$ to \eqref{HamPDEsys0} such that for any $T>0$ (cf. \cite[Lemma 3.6]{KarSuzYam})
\begin{equation}\label{conv1}
\lim_{k\to+\infty}\sup_{t\in[-T,T]}\|
(f(\cdot,t+t_k'),z(\cdot,t+t_k'))
-
(\hat f(\cdot,t),\hat z(\cdot,t))
\|_{C^2}=0	.
\end{equation}
From relation \eqref{NablaC1}, we derive that
$$
\nabla\hat f(\cdot,t)
=
\nabla\hat z(\cdot,t)
=0,\qquad
\forall t\in[-T,T].
$$
Hence, the solution $(\hat f,\hat z)$ must be spatially homogeneous and, thus, it is a solution 
to the ODE model \eqref{orig} and we 
denote it by $(\tilde f(t),\tilde z(t))$ in the following. Moreover, \eqref{conv0} follows from \eqref{conv1}.
\medskip

\noindent{\bf Claim 2:}  Now that we have proven the Claim 1, we can show \eqref{l}.

Denote by $l\geq0$ to be the time period of the above ODE solution 
$(\tilde f(t),\tilde z(t))$ to \eqref{orig} on $\cal O$ as given in Claim 1. 
{Recall that \eqref{orig} was shown in \cite{HMHS} to have closed orbits around a single fix point in the parameter range \eqref{para}.
If the considered ODE-orbit $\cal O$ does not only contain the fix point, we have $l>0$.} We then take $T>2l$. 

By the previous claim, any sequence $\{t_k\uparrow\infty\}$ admits a subsequence $\{t_k'\}\subset\{t_k\}$ and a solution $(\tilde f(t),\tilde z(t))$ to \eqref{orig} such that ${\cal O}=\{ (\tilde f(t), \tilde z(t))\}_{t\in\mathbb{R}}$ and 
\eqref{conv0} holds true. Let a fixed $t\in[-T,T]$, then the periodicity
\begin{equation}\label{period}
	(\tilde f(t+l),\tilde z(t+l))
=
(\tilde f(t),\tilde z(t))
\end{equation}
allows to calculate
\begin{align*}
	\limsup_{k\to\infty}\|
	&\left(f(\cdot,t+l+t_k'),z(\cdot,t+l+t_k')\right)
	-
		\left(f(\cdot,t+t_k'),z(\cdot,t+t_k')\right)
	\|_{C^2}
	\\&\leq
	\lim_{k\to\infty}\|
	\left(f(\cdot,t+l+t_k'),z(\cdot,t+l+t_k')\right)
	-
		(\tilde f(t+l),\tilde z(t+l))
	\|_{C^2}
	\\&\quad+
	\lim_{k\to\infty}\|
		\left(f(\cdot,t+t_k'),z(\cdot,t+t_k')\right)
		-
		(\tilde f(t),\tilde z(t))
	\|_{C^2}=0,
\end{align*}
where we have added and subtracted relation \eqref{period} and used the triangle inequality. Thus, we get
$$
	\lim_{s\to\infty}\|
		\left(f(\cdot,t+l+s),z(\cdot,t+l+s)\right)
		-
		( f(\cdot,t+s), z(\cdot,t+s))\|_{C^2}=0
$$
	and \eqref{l} follows.
	

\section{Proof of Theorem \ref{thm:2}}

At this section, we denote by $C$ various constants that may change from line to line.

We begin by testing the second equation in \eqref{HamPDEsys0} with $z$ and obtain after integration by parts
	\begin{align*}
		\frac12\frac{d}{dt} \|z\|_2^2&= - d_z \|\nabla z\|_2^2    +\int_\Omega z(\sigma - f (r-1)) z(1-z)(1-z^{N-1})
		\\&\leq
		- d_z \|\nabla z\|_2^2  
		+\sigma\int_\Omega z^2(1-z)(1-z^{N-1})\leq
				- d_z \|\nabla z\|_2^2  		+ C \| z\|_2^2  
			\end{align*}
where $C=\sigma>0$ does not depend on $d_z$. Consequentially, by multiplying with $2e^{-2Ct}$, we get
$$
\frac{d}{dt}(e^{-2Ct}\|z\|_2^2)
\leq
-2d_ze^{-2Ct}\|\nabla z\|^2_2
$$
and then
\begin{equation} 
\int_0^T\Vert \nabla z(\cdot,t)\Vert_2^2dt\leq
 \frac{e^{2C{T}}}{2d_z}
 \Vert z_0\Vert_2^2.  
 \label{eqn:d}
\end{equation} 
with a constant $C$ independent of $d_z$.

Next, we apply semi-group estimates for the Laplace operator subject to homogeneous Neumann boundary conditions, see e.g. \cite{QS,Win10}
\begin{equation} 
\Vert \nabla e^{t\Delta}\phi\Vert_s\leq C(q,s)e^{-\mu t}\max\{1, t^{-\frac{d}{2}(\frac{1}{q}-\frac{1}{s})-\frac{1}{2}}\}\Vert \phi\Vert_q, \qquad 1\leq q\leq s\leq \infty,\quad t>0 
 \label{eqn:yuko2}
\end{equation} 
for $0<\mu<\mu_2$, where $\mu_2$ denotes the second eigenvalue of $-\Delta$ (with the Neumann boundary conditions).  Actually, the first equation in \eqref{HamPDEsys0} implies
\begin{equation} 
f(\cdot,t)=e^{td_f\Delta}u_0+\int_0^te^{(t-s)d_f\Delta}\,\varphi_1(f(\cdot,s), z(\cdot,s))ds, 
 \label{yuko1}
\end{equation}  
where
$$
\varphi_1(f,z)=-f(1-f)G(z).
$$
Hence, by taking the gradient of \eqref{yuko1}, it follows from \eqref{eqn:yuko2} that for $q=\infty=s$,
\begin{equation} 
\Vert \nabla f(\cdot,t)\Vert_\infty\leq C, \qquad t \geq 0.  
 \label{eqn:morrey2}
\end{equation} 
Note that again the constant $C>0$ is independent of $d_z>1$.  Similar, we consider 
\begin{equation} 
z(\cdot,t)=e^{td_z\Delta}z_0+\int_0^te^{(t-s)d_z\Delta}\,\varphi_2(f(\cdot,s), z(\cdot,s))ds, 
 \label{eqn:int-2}
\end{equation} 
where
$$
\varphi_2(f,z)=(\sigma - f (r-1)) z(1-z)(1-z^{N-1})
$$
and hence 
\begin{equation} 
\Vert \nabla z(\cdot,t)\Vert_\infty\leq C, \qquad t\geq 0.   
 \label{eqn:morrey3}
\end{equation} 
Again this $C>0$ is independent of $d_z\geq 1$.

Then, since we have already proven $\|f\|_\infty,\|z\|_\infty<1$ in Theorem~\ref{thm:1}, we use (\ref{eqn:morrey2}), (\ref{eqn:morrey3}), and apply (\ref{eqn:yuko2}) with $q=\infty$ to a differentiated version of 
(\ref{yuko1}) ({that is, to the Duhamel formula of differentiated versions of the equation for $f$ in \eqref{HamPDEsys0}})
to get 
\[ 
\Vert \nabla^2f(\cdot,t)\Vert_s\leq C, \qquad t\geq 0, \quad s>d 
\]  
under the assumption $f_0, z_0\in W^{3,s}(\Omega)$ (actually what is needed here is $(f_0, z_0)\in W^{1,\infty}\cap W^{2,s}$),  with $C>0$ independent of $d_f\geq 1$.  
 Similarly, we obtain 
\[ 
\Vert \nabla^2 z(\cdot,t)\Vert_s\leq C, \qquad t\geq 0 
\] 
and hence 
\[ 
\Vert f(\cdot,t)\Vert_{W^{3,s}}+\Vert z(\cdot,t)\Vert_{W^{3,s}}\leq C, \qquad t\geq 0, 
\] 
which implies also 
\[ \Vert f_t(\cdot,t)\Vert_s\leq C, \qquad t\geq 0. \] 

Next, the corresponding family $\{ (f,z)=(f_{d_z}(\cdot,t), z_{d_z}(\cdot, t))\}$ for $d_z\geq 1$ is compact in $C([0, T], C^2(\overline{\Omega})\times C^2(\overline{\Omega}))$ by Morrey's and Ascoli-Arzel\'a's theorems.  Thus, any sequence $(d_z)_k\uparrow+\infty$ admits a subsequence $\{ (d_z)_k'\}\subset\{ (d_z)_k\}$ and $(F,Z)=(F(\cdot,t), Z(\cdot,t))$ such that $(f_{(d_z)_k'}, z_{(d_z)_k'})\rightarrow (F,Z)$ in $C([0,T], C^2(\overline{\Omega})\times C^2(\overline{\Omega}))$.  

The above $F=F(\cdot,t)$ satisfies 
\begin{equation*}
\begin{cases} 
F_t=d_f\Delta F+\varphi_1(F,Z) &\quad \mbox{in $\Omega\times(0,T)$} \\ 
\frac{\partial F}{\partial \nu}=0 &\quad \mbox{on $\partial\Omega\times(0,T)$} 
\end{cases}
\end{equation*} 
subject to $\left. F\right\vert_{t=0}=f_0(x)$. On the other hand, $Z=Z(\cdot, t)$ is independent of $x$ by taking the limit $(d_z)_k\uparrow+\infty$ in (\ref{eqn:d}).  Since $(f,z)=(f_{d_z}(\cdot,t), z_{d_z}(\cdot,t))$ satisfies 
\[ \frac{d}{dt}\aint_\Omega zdx
=\aint_\Omega \varphi_2(f,z)dx, \qquad \left.\aint_\Omega z\ dx\right\vert_{t=0}=\overline{z}_0, \] 
it holds that 
\[ \frac{dZ}{dt}=\aint_\Omega \varphi_2(F,Z)dx \quad \quad \mbox{in $(0,T)$} \]  
subject to $\left. Z\right\vert_{t=0}=\overline{z}_0$.  

Finally, we remark that existence of a unique local-in-time solution to the shadow system (\ref{ssHamPDEsys0}) with (\ref{eqn:shadow2}) follows from standard argument. 
Moreover, from the compactness of the set $\{ (f,z)=(f_{d_z}(\cdot,t), z_{d_z}(\cdot, t))\}$ and the uniqueness of the limit, we conclude the convergence (\ref{eqn:conv}), i.e.
$$
\lim_{d_z\uparrow +\infty}\sup_{t\in [0,T]}\left\{ \Vert f(\cdot, t)-F(\cdot, t)\Vert_{C^2} +\Vert z(\cdot,t)-Z(t)\Vert_{C^2}\right\}=0.  
$$

\section{Proof of Theorem \ref{th3}}
\begin{proof}[Proof of Theorem \ref{th3}]

The shadow system \eqref{ssHamPDEsys0} takes the form 
\begin{align}\label{ssHamPDEsys1}
\begin{cases}
\partial_t F - d_F \Delta F =  -F(1-F)G(Z),
\quad &\text{in}\ \Omega\times(0,T),\\[1mm]
\frac{d}{dt}Z = Z(1-Z)(1-Z^{N-1}) \aint_\Omega(\sigma - F (r-1)) dx,
\quad &\text{in}\ \Omega\times(0,T),\\[1mm]
\frac{\partial F}{\partial \nu}=0,\quad &\text{on}\ \partial\Omega\times(0,T)
,\\[1mm]
F\vert_{t=0}=f_0(x), \qquad  Z\vert_{t=0}=\overline{z}_0,\quad &\text{in}\ \Omega.
\end{cases}
\end{align}
First, we verify that the Hamiltonian \eqref{H}--\eqref{H2} 
also applies to the shadow system \eqref{ssHamPDEsys1}:
\begin{align*}
\frac{d}{dt} \mathcal{H}(F,Z) &=
\int_\Omega (H_f(F) 
F_t+H_{z}(F) Z_t)\,dx	
\nonumber\\
&=\int_\Omega \left[(H_f(F) \phi(F,Z) H_z(Z)+d_fH_f\Delta f-H_z(Z) \aint_\Omega \phi(F,Z) H_f(F)\;dx\right]dx
\nonumber\\
&=-\int_\Omega(d_fH_{ff}|\nabla f|^2)\,dx
<0, \qquad \text{for all } \ (\nabla f, \nabla z)\neq 0,
\end{align*}
since $Z=Z(t)$ and $H_z(Z)$ are spatially homogeneous.

Given the same Hamiltonian as for the PDE model 
\eqref{HamPDEsys0}, we can follow all the steps of the proof of Theorem~\ref{thm:1} to prove Theorem~\ref{th3}.
In particular, global existence of solutions to the shadow system \eqref{ssHamPDEsys1} and the characterisation of the $\omega$-limit set can be performed in the same way. 
\end{proof}

\section*{Acknowledgements}
K.F. acknowledges the kind hospitality of the 
universities of Osaka and Mannheim and was partially supported by NAWI Graz. 
The second author was supported by DFG Project CH 955/3-1. 
Part of the current work was inspired during visits of the first and the  second author at the Department of System Innovation of Osaka University. 
E.L. would like to express his gratitude for the warm hospitality. The third author was partially supported by JSPS Grand-in-Aid for Scientific Research 26247013, 15KT0016, 16H06576 and JSPS core-to-core program Advanced Research Networks.



\begin{thebibliography}{10}


\bibitem{AGN} B. Allen, J. Gore, M. A. Nowak, {\it Spatial dilemmas of diffusible public goods}, eLife, e01169, 2013.

\bibitem{Ama} H. Amann, \textit{Global existence for semilinear parabolic problems.} 
J. Reine Angew. Math. \textbf{360}, (1985), pp. 47--83.

 
%

\bibitem{BP} D. Bothe, M. Pierre, 
\textit{Quasi-steady-state approximation for a reaction-diffusion system with fast 
intermediate,} J. Math. Anal. Appl. \textbf{368}, n.1 (2010) pp. 120-132.

 
\bibitem{CDF} J. A. Ca{\~n}izo, L. Desvillettes, K. Fellner,  \textit{Improved duality estimates and applications to reaction-diffusion equations,} Comm. Partial Differential Equations, \textbf{39} no.6 (2014) 1185--1204.

%
%
%
%

\bibitem{DF3} L. Desvillettes, K. Fellner,  \textit{Exponential Convergence to Equilibrium for a Nonlinear Reaction-Diffusion Systems Arising in Reversible Chemistry} System Modelling and Optimization, IFIP AICT, \textbf{443} (2014) 96--104.

 \bibitem{DFPV} L. Desvillettes, K. Fellner, M. Pierre and J. Vovelle,
 \textit{About global existence for quadratic systems of reaction-diffusion,}
 J. Advanced Nonlinear Studies \textbf{7} no 3. (2007) pp. 491--11. 

\bibitem{Frey}E. Frey, {\it Evolutionary game theory: Theoretical concepts and applications to microbial communities}, Physica A: Statistical Mechanics and its Applications, {\bf 389}, 2, 4265-4298, 2010.

\bibitem{HMHS} C. Hauert, S. De Monte, J. Hofbauer, K. Sigmund, \textit{Replicator Dynamics for Optional Public Good Games}, J. theor. Biol. \textbf{218}
 (2002) pp. 187--194.
 
\bibitem{HS} J. Hofbauer, K. Sigmund, Evolutionary Games and Population Dynamics, Cambridge University Press (1998).

%
%

\bibitem{KarSuzYam} G. Karali, T. Suzuki and Y. Yamada, \textit{Global-in-time behavior of the solution to a Gierer-Meinhardt system}, Discrete and Continuous Dynamical Systems, \textbf{33}, 7, p.2885-2900, 2013.

\bibitem{kee78}Keener, J. P. (1978). Activators and inhibitors in pattern formation, {\it Stud. Appl. Math.}  59:  1-23. 

\bibitem{LSY} E. Latos, T. Suzuki, Y. Yamada, \textit{Transient and asymptotic dynamics of a prey-predator system with diffusion}, Math. Meth. Appl. Sci. \textbf{35} (2012) pp. 1101--1109.

\bibitem{MK}  R. Menon, K.S. Korolev, \textit{Public good diffusion limits microbial mutualism}, Phys. Rev. Lett., \textbf{114}, 168102, (2015).

%
%

\bibitem{Mim} M. Mimura, Asymptotic Behaviors of a Parabolic System Related to a Planktonic Prey and Predator Model, SIAM Journal on Applied Mathematics, {\bf 37}, No. 3 (1979), pp. 499--512.

\bibitem{MottRoth} de Mottoni, P. and Rothe, F., Convergence to homogeneous equilibrium state for generalized Volterra-Lotka systems with diffusion, SIAM J. Appl. Math., {\bf 37}, 3, (1979), pp. 648--663.

\bibitem{nf87}Nishiura Y. and Fujii, H.  (1987). Stability of singularly perturbed solutions to systems of reaction-diffusion equations, {\it SIAM J. Math. Anal.} 18: 1726-1770.  

\bibitem{QS} P. Quittner and P. Souplet,   Superlinear Parabolic Problems: Blow-up, Global Existence and Steady States, {\it Birkh\"{a}user Advanced Texts,}
 (2007).
 
\bibitem{rothe}F. Rothe, Global Solutions of Reaction-Diffusion Equations, {\it Lecture Notes in Mathematics, Springer-Verlag,(1984)}. 


 \bibitem{SmollerBook} J. Smoller, \textit{Shock waves and reaction-diffusion equations}, Grundlehren der Mathematischen Wissenschaften (Fundamental
              Principles of Mathematical Sciences), \textbf{258}, Springer-Verlag, New York, 1994.
  	
\bibitem{SuzYam} T. Suzuki, Y. Yamada, \textit{Global-in-time behavior of Lotka-Volterra system with diffusion}, Indiana Univ. Math. {\bf J. 64 No. 1} (2015), 181-€"216.

 \bibitem{TH} A. Traulsen and C. Hauert, {\it Stochastic Evolutionary Game Dynamics}, Reviews of Nonlinear Dynamics and Complexity, (2010), https://doi.org/10.1002/9783527628001.ch2.
 
 \bibitem{Win10} Michael Winkler, \textit{Aggregation vs. global diffusive behavior in the higher-dimensional Keller--Segel model}, J. Differential Equations 248 (2010) 2889--2905. 
 
\end{thebibliography}
\end{document}